\newcommand{\al}{\alpha}
\newcommand{\be}{\beta}
\newcommand{\ga}{\gamma}
\newcommand{\de}{\delta}
\newcommand{\la}{\lambda}
\newcommand{\om}{\omega}
\newcommand{\eps}{\varepsilon}
\newcommand{\iy}{\infty}
\theoremstyle{plain}
\numberwithin{equation}{section}
\newtheorem{thm}{Theorem}[section]
\newtheorem{lem}[thm]{Lemma}
\newtheorem{prop}[thm]{Proposition}
\newtheorem{cor}[thm]{Corollary}
\theoremstyle{definition}
\theoremstyle{remark}
\DeclareMathOperator*{\Res}{Res}
\DeclareMathOperator{\rank}{rank}
\DeclareMathOperator{\diag}{diag}
\begin{document}

\begin{center}
{\large\bf Spectral data asymptotics for the matrix Sturm-Liouville operator}
\\[0.2cm]
{\bf Natalia P. Bondarenko} \\[0.2cm]
\end{center}

\vspace{0.5cm}

{\bf Abstract.} The self-adjoint matrix Sturm-Liouville operator on a finite interval with a boundary condition in the general form is studied. We obtain asymptotic formulas for the eigenvalues and the weight matrices of the considered operator. These spectral characteristics play an important role in the inverse spectral theory. Our technique is based on analysis of analytic functions and on the contour integration in the complex plane of the spectral parameter. In addition, we adapt the obtained asymptotic formulas to the Sturm-Liouville operators on a star-shaped graph with two different types of matching conditions.
  
\medskip

{\bf Keywords:} matrix Sturm-Liouville operator; general self-adjoint boundary condition; eigenvalue asymptotics; asymptotics of weight matrices; Sturm-Liouville operators on graphs

\medskip

{\bf AMS Mathematics Subject Classification (2010):} 34B09 34B24 34B45 34L20 34L40

\vspace{1cm}

\section{Introduction}

The paper concerns spectral theory of differential operators. In particular, we focus on analysis of matrix Sturm-Liouville operators
in the form $\ell Y = -Y'' + Q(x) Y$, where $Q(x)$ is a matrix function. Operators of such form generalize scalar Sturm-Liouville operators,
which have been studied fairly completely (see the monographs \cite{Mar77, LS91, FY01}).
Matrix Sturm-Liouville operators have applications in various branches of physics.
For example, such operators are used in elasticity theory \cite{BHN95}, they are also applied for description of electromagnetic waves \cite{BS95} and
nuclear structure \cite{Chab04}. Matrix Sturm-Liouville operators also generalize Sturm-Liouville operators on metric graphs
(see \cite{BCFK06, Exner08, PP04, Har05, Now07, Yur16} are references therein). The latter operators are often called quantum graphs. They are applied for modeling
wave propagation through a domain being a thin neighborhood of a graph. Such models are used in organic chemistry, mesoscopic physics, nanotechnology and
other branches of science and engineering.

In this paper, we consider the self-adjoint Sturm-Liouville operator on a finite interval with a boundary condition in the general form defined as follows.
Let $L = L(Q(x), T, H)$ be the boundary value problem for the matrix Sturm-Liouville equation:
\begin{gather} \label{eqv}
-Y''(x) + Q(x) Y(x) = \la Y(x), \quad x \in (0, \pi), \\ \label{bc}
Y(0) = 0, \quad V(Y) := T (Y'(\pi) - H Y(\pi)) - T^{\perp} Y(\pi) = 0,
\end{gather}
where 
\begin{itemize}
\item $Y(x) = [y_j(x)]_{j = 1}^m$ is a vector function; 
\item $Q(x) = [q_{jk}(x)]_{j, k = 1}^m$ is the Hermitian  matrix function, called the {\it potential}, 
such that $Q(x) = Q^{\dagger}(x)$, $q_{jk} \in L_2(0, \pi)$, $j, k = \overline{1, m}$;
\item $\lambda$ is the spectral parameter;
\item $T$ is an orthogonal projector in $\mathbb C^m$, $T \in \mathbb C^{m \times m}$, $T^{\perp} = I_m - T$;
\item $H = H^{\dagger} \in \mathbb C^{m \times m}$, $H = T H T$.
\end{itemize}

Here and below $\mathbb C^m$ and $\mathbb C^{m \times m}$ are the spaces of complex 
$m$-vectors and $m \times m$-matrices, respectively, the symbol $\dagger$ denotes the conjugate transpose:
$A = [a_{jk}]$, $A^{\dagger} = [\overline{a_{kj}}]$,
and $I_m$ is the $m \times m$ unit matrix.

Note that the boundary value problem $L$ is self-adjoint. The boundary condition $V(Y) = 0$ is the self-adjoint 
boundary condition in the most general form. 
For simplicity, we impose the Dirichlet boundary condition at $x = 0$.
The problem with the both boundary conditions in the general form, similar to $V(Y) = 0$, 
can also be investigated by using our methods.

In the major part of the literature on the spectral theory for matrix Sturm-Liouville operators, the Dirichlet boundary conditions $Y(0) = Y(\pi) = 0$ or the Robin boundary conditions $Y'(0) - \Theta_1 Y(0) = Y'(\pi) + \Theta_2 Y(\pi) = 0$, $\Theta_j \in \mathbb C^{m \times m}$, $j = 1, 2$, have been considered, since they are the simplest ones. In particular, in the papers \cite{Pap95, Car99, CK09, Bond11} asymptotic formulas for eigenvalues of Sturm-Liouville operators have been derived. In \cite{CK09, MT10, Bond11, Bond19}, inverse spectral problems have been studied, that consist in reconstruction of matrix Sturm-Liouville operators by using the eigenvalues and the weight matrices. Those problem statements generalize the classical inverse problem of recovering the potential $q(x)$ of the scalar Sturm-Liouville problem 
$$
-y'' + q(x) y = \la y, \quad y(0) = y(\pi) = 0,
$$
from the corresponding eigenvalues $\{ \la_n \}_{n \ge 1}$ and the weight numbers 
$$
\al_n := \int_0^{\pi} y_n^2(x)\, dx, \quad n \ge 1,
$$
where $\{ y_n(x) \}_{n \ge 1}$ are the eigenfunctions, normalized by the condition $y_n'(0) = 1$ (see \cite{Mar77, FY01}). The most remarkable results of \cite{CK09, MT10, Bond11, Bond19} are the characterization theorems for the spectral data of the matrix Sturm-Liouville operator, i.e. necessary and sufficient conditions for the solvability of the corresponding inverse problems. A crucial role in those conditions is played by asymptotic formulas for the eigenvalues and the weight matrices.

Spectral properties of operators with general self-adjoint boundary conditions in the form \eqref{bc} have not been so well-studied yet, since this case is more complex for investigation. Recently Xu \cite{Xu19} proved uniqueness theorems for inverse problems for equation~\eqref{eqv} with the both boundary conditions in the general self-adjoint form. Such operators especially worth to be studied because of their applications to quantum graphs. Inverse problem theory for differential operators on graphs is a rapidly developing field nowadays (see the survey \cite{Yur16}). However, characterization of spectral data is an open problem even for Sturm-Liouville operators on the simplest star-shaped graphs, as well as for the matrix Sturm-Liouville operator with the general self-adjoint boundary conditions.

The goal of the present paper is to make the first step to characterization of the spectral data of the problem \eqref{eqv}-\eqref{bc}. We aim to obtain asymptotic formulas for the eigenvalues and the weight matrices of $L$. In the future, we plan to apply these results to the inverse problem theory.

Let us introduce the spectral data studied in our paper. Further we show that the problem $L$ has a countable set of real eigenvalues. It is convenient to number them as
$\{ \la_{nk} \}_{n \in \mathbb N, \, k = \overline{1, m}}$ in nondecreasing order:
$\la_{n_1, k_1} \le \la_{n_2, k_2}$ for $(n_1, k_1) < (n_2, k_2)$. 
For pairs of integers, we mean by $(n_1, k_1) < (n_2, k_2)$ that $n_1 < n_2$ or $n_1 = n_2$, $k_1 < k_2$.
The double indices $(n, k)$ are used because of the eigenvalue asymptotics (see Theorem~\ref{thm:asymptla}).
Multiple eigenvalues occur in the sequence $\{ \la_{nk} \}_{n \in \mathbb N, \, k = \overline{1, m}}$ multiple times, according to the multiplicities. 

Let $\Phi(x, \la)$ be the $m \times m$ matrix solution of equation~\eqref{eqv}, satisfying the boundary conditions $\Phi(0, \la) = I_m$, $V(\Phi) = 0$. Set $M(\la) := \Phi'(0, \la)$. The matrix functions $\Phi(x, \la)$ and $M(\la)$ are called {\it the Weyl solution} and {\it the Weyl matrix} of the problem $L$, respectively. 
Weyl matrices generalize Weyl functions for scalar differential operators (see \cite{Mar77, FY01}) and play an important role in inverse problem theory of matrix Sturm-Liouville operators (see, e.g., \cite{Bond11, Bond19}).
One can easily show, that the Weyl matrix $M(\la)$ is meromorphic in the $\la$-plane. All its singularities are simple poles,
which coincide with the eigenvalues $\{ \la_{nk} \}_{n \in \mathbb N, \, k = \overline{1, m}}$.
Define {\it the weight matrices}:
\begin{equation} \label{defal}
    \al_{nk} = -\Res_{\la = \la_{nk}} M(\la).
\end{equation}

The collection $\{ \la_{nk}, \al_{nk} \}_{n \in \mathbb N, \, k = \overline{1, m}}$ is called {\it the spectral data} of the problem $L$.

A general approach to derivation of eigenvalue asymptotics was suggested in \cite{Nai68} for arbitrary first-order differential systems.
This approach is based on analysis of analytic characteristic functions in the complex plane of the spectral parameter.
Nevertheless, adaption of general methods to systems of special form (e.g, matrix Sturm-Liouville operators or differential operators on graphs)
usually requires an additional investigation (see, e.g., \cite{Pap95, Car99, CK09, Bond11, MP15}). 
We also mention that matrix Sturm-Liouville operators have been studied on the half-line \cite{AM60, Har05} and on the line \cite{Wad80, Olm85, Bond17-line}.
However, operators on infinite domains usually have a finite number of eigenvalues, therefore problems of eigenvalue asymptotics are not relevant to them.

The main difficulty in derivation of the spectral data asymptotics for the problem~\eqref{eqv}-\eqref{bc} is that its spectrum can contain infinitely many groups of multiple and/or asymptotically multiple eigenvalues (see Theorem~\ref{thm:asymptla} for details). Consequently, it is impossible to obtain separate asymptotic formulas for the sequences $\{ \al_{nk} \}_{n \in \mathbb N}$, $k = \overline{1, m}$. In the present paper, we overcome this difficulty, by developing the ideas of \cite{Bond16, Bond17, Bond19}. The eigenvalues are grouped by asymptotics, and asymptotic formulas are derived for the sums of the weight matrices, corresponding to each eigenvalue group.

The paper is organized as follows. Section~2 is devoted to the asymptotic behavior of the eigenvalues. Its main result is Theorem~\ref{thm:asymptla}, proof of which is based on complex analysis of an analytic characteristic function and on the matrix Rouche's Theorem. In Section~3, we define the sums of the weight matrices, corresponding to eigenvalue groups. We derive asymptotic formulas for those sums (Theorems~\ref{thm:asymptals} and~\ref{thm:asymptal}), by developing methods of Section~3 and by using contour integration. In Section~4, our results for the matrix Sturm-Liouville operator are adapted to the Sturm-Liouville operators on the star-shaped graph with two types of matching conditions: $\de$-coupling and $\de'$-coupling, arising in applications.

\section{Eigenvalue asymptotics}

The goal of this section is to derive asymptotic of eigenvalues of the problem $L$. First we need to introduce some notations.

Denote $p = \rank(T)$, $1 \le p \le m - 1$. Clearly, $\rank(T^{\perp}) = m - p$. Without loss of generality, we assume that
\begin{equation} \label{defT}
    T = \begin{bmatrix}
           I_p & 0 \\ 0 & 0
	\end{bmatrix}, \quad
    T^{\perp} = \begin{bmatrix}
                     0 & 0 \\ 0 & I_{m - p}
               \end{bmatrix}, \quad 	
    H = \begin{bmatrix}
            h & 0 \\ 0 & 0
        \end{bmatrix}.
\end{equation}
One can achieve this condition, applying an appropriate unitary transform $U = (U^{\dagger})^{-1}$:
\begin{equation} \label{transformU}
    \tilde T = U^{\dagger} T U, \quad \tilde T^{\perp} = U^{\dagger} T^{\perp} U, \quad
    \tilde H = U^{\dagger} H U, \quad \tilde Q(x) = U^{\dagger} Q(x) U, \quad \tilde Y(x) = U^{\dagger} Y(x).
\end{equation}

Denote 
$$
\om = \frac{1}{2} \int_0^{\pi} Q(x) \, dx = \begin{bmatrix} \om_{11} & \om_{12} \\ \om_{21} & \om_{22} \end{bmatrix},
$$
where $\om_{11} \in \mathbb C^{p \times p}$, $\om_{22} \in \mathbb C^{(m - p) \times (m - p)}$. Obviously,
these matrices are Hermitian: $\om_{11} = \om_{11}^{\dagger}$, $\om_{22} = \om_{22}^{\dagger}$.

Denote by $S(x, \la)$ and $C(x, \la)$ the matrix solutions of equation~\eqref{eqv}, satisfying the initial conditions
$$
   S(0, \la) = C'(0, \la) = 0, \quad S'(0, \la) = C(0, \la) = I_m.
$$
The matrix functions $S^{(\nu)}(x, \la)$ and $C^{(\nu)}(x, \la)$ are entire in $\la$-plane for each fixed $x \in [0, \pi]$, $\nu = 0, 1$.
The eigenvalues of the boundary value problem $L$ coincide with the zeros of 
the {\it characteristic function} $\Delta(\la) := \det(V(S(x, \la)))$, counting with their multiplicities.
This fact can be proved similarly to \cite[Lemma~5]{Bond19}.
Since the problem $L$ is self-adjoint, its eigenvalues are real.
Obviously, the function $\Delta(\la)$ is entire. 
In order to study its asymptotic behavior, we need the relations for $S^{(\nu)}(\pi, \la)$ and $C^{(\nu)}(\pi, \la)$, $\nu = 0, 1$, provided below.

Denote $\rho := \sqrt{\la}$, $\tau := \mbox{Im}\,\rho$. Using the transformation operators (see \cite{AM60}), one can obtain the following representations:
\begin{equation} \label{asymptCS}
\arraycolsep=1.4pt\def\arraystretch{2.2}
\left.
\begin{array}{l}
S(\pi, \la) = \dfrac{\sin \rho \pi}{\rho}   I_m - \dfrac{\cos \rho \pi}{\rho^2}   \om + \dfrac{1}{2\rho^2} \displaystyle\int_0^{\pi} \cos \rho (\pi - 2t) Q(t) \, dt + \dfrac{K(\rho)}{\rho^3}, \\
S'(\pi, \la) = \cos \rho \pi   I_m + \dfrac{\sin \rho \pi}{\rho}   \om - \dfrac{1}{2 \rho} \displaystyle\int_0^{\pi} \sin \rho (\pi - 2t) Q(t) \, dt + \dfrac{K(\rho)}{\rho^2}, \\
C(\pi, \la) = \cos \rho \pi   I_m + \dfrac{\sin \rho \pi}{\rho}   \om + \dfrac{1}{2 \rho} \displaystyle\int_0^{\pi} \sin \rho(\pi - 2 t) Q(t) \, dt + \dfrac{K(\rho)}{\rho^2}, \\
C'(\pi, \la) = - \rho \sin \rho \pi   I_m + \cos \rho \pi   \om + \dfrac{1}{2} \displaystyle\int_0^{\pi} \cos \rho (\pi - 2 t) Q(t) \, dt + \dfrac{K(\rho)}{\rho}. \\
\end{array}
\right\}
\end{equation} 
Here the same notation $K(\rho)$ is used for various matrix functions, entire by $\rho$ and satisfying the conditions
$$
    K(\rho) = O(\exp(|\tau| \pi)),  \quad \int_{-\iy}^{\iy} \| K(\rho) \|^2 \, d\rho < \iy.
$$
Throughout the paper, we use the matrix norm, induced by the Eucliden vector norm in $\mathbb C^m$, i.e. $\| A \|$ equals to the square root of the largest eigenvalue of the matrix $A^{\dagger} A$.

\begin{thm} \label{thm:asymptla}
The spectrum of the boundary value problem $L$ is a countable set of real eigenvalues
$\{ \la_{nk} \}_{n \in \mathbb N, \, k = \overline{1, m}}$, counted with their multiplicities,
$\la_{n_1, k_1} \le \la_{n_2, k_2}$, if $(n_1, k_1) < (n_2, k_2)$. Moreover, the following asymptotic
relations hold for $\rho_{nk} := \sqrt{\la_{nk}}$, $n \in \mathbb N$:
\begin{align} \label{asymptla1}
& \rho_{nk} = n - \frac{1}{2} + \frac{z_k}{\pi (n - 1/2)} + \frac{\varkappa_{nk}}{n}, \quad k = \overline{1, p}, \\ \label{asymptla2}
& \rho_{nk} = n + \frac{z_k}{\pi n} + \frac{\varkappa_{nk}}{n}, \quad k = \overline{p + 1, m},
\end{align}
where $\{ z_k \}_{k = 1}^p$ and $\{ z_k \}_{k = p + 1}^m$ are the eigenvalues of the Hermitian matrices 
$(\om_{11} - H)$ and $\om_{22}$, respectively, numbered in the nondecreasing order according to their multiplicities:
$z_k \le z_{k + 1}$, $k = \overline{1, p-1}$ and $k = \overline{p + 1, m - 1}$,
and $\{ \varkappa_{nk} \} \in l_2$.
\end{thm}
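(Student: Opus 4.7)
The plan is to analyze the characteristic function $\Delta(\la)=\det V(S(x,\la))$ using the expansions \eqref{asymptCS}, count its zeros by the matrix Rouche's theorem, and refine their location by Hermitian matrix perturbation. Assuming without loss of generality (via the transformation \eqref{transformU}) that $T$, $T^{\perp}$, $H$ already have the canonical block form \eqref{defT}, I would split $S(\pi,\la)$ and $S'(\pi,\la)$ into blocks $S_{ij}$, $S'_{ij}$ conformable with $(p,m-p)$ and compute
\[
V(S(x,\la)) = \begin{bmatrix} S'_{11}-h\,S_{11} & S'_{12}-h\,S_{12} \\ -S_{21} & -S_{22} \end{bmatrix}.
\]
Substituting \eqref{asymptCS} block by block yields: top-left $=\cos\rho\pi\,I_p+\rho^{-1}\sin\rho\pi\,(\om_{11}-h)+O(\rho^{-2})$; bottom-right $=-\rho^{-1}\sin\rho\pi\,I_{m-p}+\rho^{-2}\cos\rho\pi\,\om_{22}+O(\rho^{-3})$; top-right $=O(\rho^{-1})$; bottom-left $=O(\rho^{-2})$. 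A Schur-complement factorization, noting that the off-diagonal correction $A_{21}A_{11}^{-1}A_{12}=O(\rho^{-3})$ is of lower order than $A_{22}$, produces the leading term
\[
\Delta_0(\la) = (-1)^{m-p}\cos^p(\rho\pi)\Bigl(\tfrac{\sin\rho\pi}{\rho}\Bigr)^{m-p},
\]
whose positive zeros in $\rho$ are $n-\tfrac12$ of multiplicity $p$ and $n$ of multiplicity $m-p$.

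Next I would apply the matrix Rouche's theorem on circles $|\rho-(n-\tfrac12)|=r$ and $|\rho-n|=r$ for a fixed small $r>0$ and all large $n$. Standard lower bounds on $|\cos\rho\pi|$ and $|\sin\rho\pi|$ off a neighborhood of these points, together with the $K(\rho)/\rho^{\nu}$ remainders from \eqref{asymptCS}, give $|\Delta(\la)-\Delta_0(\la)|<|\Delta_0(\la)|$ on each contour; hence exactly $p$ zeros of $\Delta$ lie near each $n-\tfrac12$ and exactly $m-p$ near each $n$, which dictates the double-indexing $(n,k)$ and, together with the reality of the spectrum (self-adjointness), justifies the nondecreasing enumeration. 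Within a cluster around $n-\tfrac12$, setting $\eta=\rho-(n-\tfrac12)$, one has $\cos\rho\pi=(-1)^n\pi\eta+O(\eta^3)$ and $\sin\rho\pi=(-1)^{n-1}+O(\eta^2)$, so the equation $\Delta(\la)=0$ reduces, after dividing out the nonvanishing bottom-right Schur factor, to
\[
\det\!\bigl(\pi\eta\,I_p-\rho^{-1}(\om_{11}-h)\bigr) = O(\rho^{-2}).
\]
Weyl-type perturbation for the $p$ Hermitian eigenvalues $z_1\le\ldots\le z_p$ of $\om_{11}-h$ then gives $\eta=z_k/(\pi\rho)+o(\rho^{-1})$, which is \eqref{asymptla1}; the symmetric analysis near $\rho=n$, with $\om_{22}$ replacing $\om_{11}-h$, yields \eqref{asymptla2}.

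The main obstacle will be upgrading the qualitative $o(\rho^{-1})$ remainder to an $l_2$-summable sequence $\{\varkappa_{nk}\}$. This property must be traced back to the matrix functions $K(\rho)$, which are entire of exponential type $\pi$ with $K\!\restriction_{\mathbb R}\in L^2$. By Paley-Wiener, the sampled sequences $\{K(n-\tfrac12)\}_{n\ge 1}$ and $\{K(n)\}_{n\ge 1}$ lie in $l_2$ (classical Bessel-type sampling for band-limited functions). Following the strategy of \cite{Bond16,Bond17,Bond19}, I would recast the refined equation near each cluster as a fixed-point problem $\eta=F_n(\eta)$ in which all subleading terms enter linearly through such sampled $K$-values; the Lipschitz continuity of the eigenvalues of a Hermitian matrix (needed to correctly distribute the perturbation among possibly coinciding $z_k$'s) together with this $l_2$-sampling bound then transfers the $l_2$ property from $\{K(n)\}$, $\{K(n-\tfrac12)\}$ to the remainders $\{\varkappa_{nk}\}$ and completes the proof.
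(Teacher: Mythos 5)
Your overall architecture coincides with the paper's: isolate the leading part of the characteristic determinant, whose zeros in $\rho$ are $n-\tfrac12$ with multiplicity $p$ and $n$ with multiplicity $m-p$; count zeros by Rouche's theorem; identify the second-order constants $z_k$ as the eigenvalues of $\om_{11}-H$ and $\om_{22}$; and trace the $l_2$ remainder back to the $L^2$-sampling property of the kernels $K(\rho)$. The differences lie in the middle-level tools. Where you Schur-factorize $V(S)$ and apply the scalar Rouche theorem to the determinant, the paper applies the matrix Rouche theorem (Proposition~\ref{prop:Rouche}) twice: first to $W=V(S(\pi,\cdot))$ against the model $W_0=T\cos\rho\pi-T^{\perp}\sin\rho\pi/\rho$ to get the rough count and $\eps_{nk}=O(n^{-1})$ (Lemma~\ref{lem:rough}), and then, after passing to the $z$-plane via $\rho_n(z)=n+z/(\pi n)$ and renormalizing by the block weight $(T+n^2T^{\perp})$ in \eqref{defRn}, to $R_n(z)$ against $R(z)=T-T^{\perp}(zI_m-\om)$; this renormalization plus the Gaussian elimination leading to \eqref{tRn} plays exactly the role of your Schur complement. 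Where you invoke Bauer--Fike/Weyl perturbation and a fixed-point iteration, the paper uses the polynomial lemma (Proposition~\ref{prop:poly}) applied to $\det(H_n(z_{nk}))=\mathcal P_{nk}(\epsilon_{nk})$; that lemma is precisely the device that closes your ``fixed-point problem,'' since the coefficients of the perturbation depend on the unknown root itself. Both routes are viable: yours makes the Hermitian structure more transparent, while the paper's avoids any discussion of matching perturbed eigenvalues to unperturbed ones.

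Two points in your write-up need repair. First, the displayed reduction $\det\bigl(\pi\eta\,I_p-\rho^{-1}(\om_{11}-h)\bigr)=O(\rho^{-2})$ must not be read as a bound on the \emph{value} of the determinant: from $|\det(\mu I_p-A)|\le\varepsilon$ one only gets $\operatorname{dist}(\mu,\sigma(A))\le\varepsilon^{1/p}$ (geometric mean bound), which after the rescaling $\mu=\pi\eta\rho$ gives nothing for $p\ge 2$. The error must be kept \emph{inside} the determinant as a matrix perturbation of norm $O(\rho^{-2})$ (and, for the $l_2$ claim, of the form $n^{-1}\gamma_n$ with $\{\gamma_n\}\in l_2$), so that after rescaling you perturb the Hermitian matrix $\om_{11}-h$ by a generally non-Hermitian matrix of $l_2$-small norm; Bauer--Fike for normal matrices together with the Rouche count of $m_k$ roots near each $z_k$ then yields the ordered matching with $l_2$ errors. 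Second, the small circles around $n-\tfrac12$ and $n$ alone do not show that these clusters exhaust the spectrum; as in Lemma~\ref{lem:rough} one also needs the Rouche comparison on expanding contours such as $|\rho|=N-\tfrac14$ to exclude eigenvalues away from the lattice and to justify the global enumeration. With these two repairs your argument is correct.
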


For the proof of Theorem~\ref{thm:asymptla}, we need several auxiliary results. The following proposition is a matrix
version of Rouche's Theorem (see \cite[Lemma~2.2]{CK09}).

\begin{prop} \label{prop:Rouche}
Let $F(\la)$ and $G(\la)$ be matrix-functions, analytic in the disk $|\la - a| \le r$ and satisfying the condition
$\| G(\la) F^{-1}(\la) \| < 1$ on the boundary $|\la - a| = r$. Then the scalar functions
$\det(F)$ and $\det(F + G)$ have the same number of zeros inside the circle $|\la - a| < r$, counting with multiplicities.
\end{prop}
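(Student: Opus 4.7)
The plan is to derive the statement from the scalar argument principle applied to the entire function $\det F_t(\la)$ along the one-parameter homotopy $F_t := F + t G$, $t \in [0, 1]$, joining $F$ and $F + G$, rather than attempting to invoke the scalar Rouche Theorem directly on the pair $\det F$ and $\det(F + G)$.

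First I would fix $\la$ on the boundary $|\la - a| = r$. Since $\|G F^{-1}\| < 1$ there, the matrix $F(\la)$ is invertible on this circle (in particular $\det F(\la) \ne 0$), so the factorization $F_t = F (I + t F^{-1} G)$ is valid on the boundary for every $t \in [0, 1]$. Because $F$ is invertible on the contour, the matrices $F^{-1} G$ and $G F^{-1}$ are similar and thus have identical spectra; each eigenvalue $\mu_j$ of $F^{-1} G$ therefore satisfies $|\mu_j| \le \|G F^{-1}\| < 1$ (the spectral radius is bounded by the operator norm). Hence the eigenvalues $1 + t \mu_j$ of $I + t F^{-1} G$ are all nonzero for every $t \in [0, 1]$, which gives $\det F_t(\la) \ne 0$ uniformly on the compact set $\{|\la - a| = r\} \times [0, 1]$.

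Next, since $\det F_t(\la)$ is entire in $\la$ and nonvanishing on the boundary circle, the scalar argument principle yields
\[
N(t) \;=\; \frac{1}{2 \pi i} \oint_{|\la - a| = r} \frac{\partial_\la \det F_t(\la)}{\det F_t(\la)} \, d\la,
\]
equal to the number of zeros of $\det F_t$ inside the disk counted with multiplicity. By the uniform nonvanishing from the previous step, the integrand is jointly continuous in $(t, \la)$ on the contour, so $N \colon [0, 1] \to \mathbb Z$ is continuous and integer-valued, hence constant. In particular, $N(0) = N(1)$, which is exactly the claim.

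The principal subtlety I would be careful about is the tempting but incorrect route of applying the scalar Rouche Theorem directly to $\det F$ and $\det(F + G) = \det F \cdot \det(I + F^{-1} G)$: the hypothesis $\|G F^{-1}\| < 1$ does \emph{not} imply $|\det(I + F^{-1} G) - 1| < 1$, since, for example, when every eigenvalue $\mu_j$ of $F^{-1} G$ is close to $1$, the product $\prod_{j=1}^m (1 + \mu_j)$ can be of size $2^m$ rather than close to $1$. The homotopy argument circumvents this precisely because it requires only that $\det F_t$ stay nonzero on the contour for each $t$, a far weaker condition than a Rouche-type majorization of $|\det F_t - \det F|$ by $|\det F|$.
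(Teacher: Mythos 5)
Your proof is correct. Note that the paper itself gives no argument for this proposition: it is imported verbatim from Chelkak--Korotyaev \cite{CK09} (their Lemma~2.2), so there is nothing internal to compare against. Your homotopy argument is the standard proof of the matrix Rouch\'e theorem and every step checks out: the hypothesis $\| G F^{-1} \| < 1$ on the circle presupposes invertibility of $F$ there, the similarity $F^{-1} G = F^{-1} (G F^{-1}) F$ together with the bound of the spectral radius by the operator norm gives $\det F_t \ne 0$ on the contour for all $t \in [0,1]$, joint continuity and compactness make this nonvanishing uniform, and the argument principle then produces a continuous integer-valued zero count $N(t)$, which must be constant. Your closing remark is also well taken: the naive route of applying scalar Rouch\'e to $\det F$ and $\det(F+G)$ fails because $\det(I + F^{-1}G) = \prod_j (1+\mu_j)$ need not be close to $1$ even when all $|\mu_j| < 1$, which is precisely why the matrix version has to be stated and proved separately rather than deduced in one line from the scalar one.
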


The next proposition is \cite[Lemma~3]{Bond17}.

\begin{prop} \label{prop:poly}
Let $\{ \de_n \}_{n \ge 1}$ and $\{ \ga_n \}_{n \ge 1}$ be two sequences of nonzero numbers, such that
$$
    \de_n^k + a_{1n} \de_n^{k-1} + a_{2n} \de_n^{k-2} + \dots + a_{k-1, n} \de_n + a_{kn} = 0, \quad n \in \mathbb N,
$$
where
$$
    a_{jn} = \sum_{l = 0}^j O(\ga_n^l) o(\de_n^{j-l}), \quad j = \overline{1, k}.
$$
Then $\de_n = O(\ga_n)$ as $n \to \infty$.
\end{prop}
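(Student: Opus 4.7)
The plan is to prove Proposition~\ref{prop:poly} by contradiction. Suppose $\de_n \ne O(\ga_n)$. Then there exists a subsequence $\{n_r\}$ along which $|\de_{n_r}|/|\ga_{n_r}| \to \infty$, equivalently $\ga_{n_r}/\de_{n_r} \to 0$. The idea is to normalize the given polynomial equation by dividing through by $\de_n^k$ (which is legitimate because $\de_n \ne 0$) and show that every nonleading term becomes $o(1)$ along this subsequence, contradicting the surviving leading $1$.

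Concretely, I would first rewrite the identity as
$$
1 + \sum_{j=1}^{k} \frac{a_{jn}}{\de_n^{\,j}} = 0, \qquad n \in \mathbb N.
$$
Next, I would substitute the hypothesized decomposition of $a_{jn}$ and use the elementary calculus of symbols $O(\ga_n^l)\cdot o(\de_n^{j-l}) = o(\ga_n^l \de_n^{j-l})$. Dividing by $\de_n^{\,j}$ turns each summand into $o((\ga_n/\de_n)^l)$ for $l=0,1,\dots,j$. Along the subsequence $\{n_r\}$ the factor $(\ga_{n_r}/\de_{n_r})^l$ is bounded (it tends to $0$ for $l \ge 1$ and equals $1$ for $l=0$), so each summand is $o(1)$. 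Since $k$ is fixed and finite, $\sum_{j=1}^{k} a_{j,n_r}/\de_{n_r}^{\,j} = o(1)$, yielding $1 + o(1) = 0$ along the subsequence. This contradiction forces $\de_n = O(\ga_n)$.

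The argument is short and essentially symbolic: the only delicate point is ensuring that the $l=0$ term in $a_{jn}$, namely the $o(\de_n^{\,j})$ contribution that does not involve $\ga_n$ at all, still becomes $o(1)$ after division by $\de_n^{\,j}$; this follows directly from the definition of $o(\,\cdot\,)$ and is independent of the size of $\de_n$ relative to $\ga_n$. The main (and really only) obstacle is bookkeeping the $O/o$ algebra carefully on the subsequence, so that one can legitimately collapse a finite sum of $o(1)$ terms into a single $o(1)$ and invoke the contradiction. No additional tool beyond this and the fact that $\de_n \ne 0$ is required.
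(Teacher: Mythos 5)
Your proof is correct. Note that the paper itself gives no proof of Proposition~\ref{prop:poly} --- it is quoted from \cite[Lemma~3]{Bond17} --- so there is nothing internal to compare against; your argument (divide by $\de_n^k$, pass to a subsequence with $\ga_{n_r}/\de_{n_r}\to 0$, and observe that every term $O(\ga_n^l)\,o(\de_n^{j-l})/\de_n^{j}=o\bigl((\ga_n/\de_n)^l\bigr)$ is $o(1)$ there, contradicting $1+o(1)=0$) is the standard and complete proof of this lemma, and the $l=0$ case you single out is handled exactly as you say.
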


Now we are ready to obtain rough asymptotics for the eigenvalues.

\begin{lem} \label{lem:rough}
The problem $L$ has a countable set of eigenvalues $\{ \la_{nk} \}_{n \in \mathbb N, \, k = \overline{1, m}}$,
$\lambda_{nk} = \rho_{nk}^2$,
numbered according to their multiplicities and having the following asymptotics:
\begin{equation} \label{rough}
\rho_{nk} = n - \frac{1}{2} + \eps_{nk}, \: k = \overline{1, p}, \qquad
\rho_{nk} = n + \eps_{nk}, \: k = \overline{p + 1, m},
\end{equation}
where $\eps_{nk} = O\left( n^{-1} \right)$ as $n \to \infty$.
\end{lem}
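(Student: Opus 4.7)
The plan is to analyze the characteristic function $\Delta(\la) = \det V(S(\cdot,\la))$ by isolating a computable leading-order matrix and applying the matrix Rouch\'e theorem (Proposition~\ref{prop:Rouche}) twice---first to count eigenvalues in a large disk and to obtain an $o(1)$ bound on the deviation from $n-1/2$ or $n$, and then to refine to $O(n^{-1})$ via Proposition~\ref{prop:poly}.

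First I would use the reduced form \eqref{defT} to write $V(S(\cdot,\la))$ as a $p+(m-p)$ block matrix whose upper block-row is the restriction of $S'(\pi,\la)-HS(\pi,\la)$ and whose lower block-row is the restriction of $-S(\pi,\la)$. Substituting \eqref{asymptCS}, the $(1,1)$-block equals $\cos\rho\pi\,I_p + O(\rho^{-1}e^{|\tau|\pi})$, the $(2,2)$-block equals $-\rho^{-1}\sin\rho\pi\,I_{m-p} + O(\rho^{-2}e^{|\tau|\pi})$, and the off-diagonal $(1,2)$- and $(2,1)$-blocks are $O(\rho^{-1}e^{|\tau|\pi})$ and $O(\rho^{-2}e^{|\tau|\pi})$ respectively. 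To equalise the scaling between block-rows, I multiply on the left by $\diag(I_p,-\rho I_{m-p})$; this multiplies $\Delta$ by $(-\rho)^{m-p}$, which is nonvanishing for $\rho\neq 0$, and produces a matrix of the form $F(\la)+G(\la)$ with
\[
F(\la) = \diag\bigl(\cos\rho\pi\,I_p,\ \sin\rho\pi\,I_{m-p}\bigr), \qquad \|G(\la)\| = O\bigl(\rho^{-1}e^{|\tau|\pi}\bigr).
\]
The zeros of $\det F = \cos^{p}(\rho\pi)\sin^{m-p}(\rho\pi)$ are $\rho = n-1/2$ with multiplicity $p$ and $\rho = n$ (with $n\neq 0$) with multiplicity $m-p$.

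Next I would execute the Rouch\'e argument in the $\rho$-plane. On the circles $|\rho| = N+1/4$, $N\in\mathbb N$, the standard lower bounds $|\cos\rho\pi|,\,|\sin\rho\pi| \ge c\,e^{|\tau|\pi}$ give $\|F^{-1}\| = O(e^{-|\tau|\pi})$, whence $\|G F^{-1}\| = O(N^{-1})<1$ for large $N$. Proposition~\ref{prop:Rouche} then yields the correct total eigenvalue count inside $|\rho|<N+1/4$, in particular the countability of the spectrum and the absence of eigenvalues outside arbitrarily small neighborhoods of the zeros of $\det F$. Applied a second time on fixed small disks $|\rho-(n-1/2)|=\de$ and $|\rho-n|=\de$ for each large $n$, Proposition~\ref{prop:Rouche} gives exactly $p$ (respectively $m-p$) eigenvalues in each disk, so $\eps_{nk}=o(1)$.

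Finally, to upgrade to $\eps_{nk} = O(n^{-1})$, I substitute $\rho = n-1/2+\eps_{nk}$ for $k\le p$ (resp.\ $\rho = n+\eps_{nk}$ for $k>p$) into $\det(F+G) = 0$, use $\cos\rho\pi = (-1)^{n}\sin\pi\eps_{nk}$ and $\sin\rho\pi = (-1)^{n+1}\cos\pi\eps_{nk}$ (and their shifted analogues in the second case), and expand the determinant. The $p$ roots lying in a common disk satisfy a polynomial equation in $\eps_{nk}$ of degree $p$ whose coefficients, after extraction of the nonvanishing cosine factors and the $O(n^{-1})$ bound on $G$, take the form $a_{jn} = \sum_{l=0}^{j} O(n^{-l})\,o(\eps_{nk}^{j-l})$; Proposition~\ref{prop:poly} with $\ga_n = n^{-1}$ yields the claim. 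The main obstacle I anticipate is bookkeeping: the asymmetric $\rho$-scaling between the two block-rows must be tracked carefully through the determinant expansion, and the polynomial structure in $\eps_{nk}$ must be cleanly separated from the exponential factors $e^{|\tau|\pi}$ that appear in the remainder bounds; once the rescaling $\diag(I_p,-\rho I_{m-p})$ is in place, the estimates become uniform in $\rho$ on the chosen contours and the rest is routine.
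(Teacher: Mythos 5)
Your proposal is correct and follows essentially the same route as the paper: the same splitting of $V(S(\pi,\la))$ into the leading term $T\cos\rho\pi - T^{\perp}\rho^{-1}\sin\rho\pi$ plus an $O(\rho^{-1}e^{|\tau|\pi})$ remainder, the same two-stage application of the matrix Rouch\'e theorem (large contours for the count, small disks for localization), and the same final substitution $\rho=n-1/2+\eps_{nk}$ (resp.\ $\rho=n+\eps_{nk}$) feeding a degree-$p$ (resp.\ degree-$(m-p)$) polynomial into Proposition~\ref{prop:poly} with $\ga_n=n^{-1}$. Your explicit left-multiplication by $\diag(I_p,-\rho I_{m-p})$ is only a cosmetic normalization of what the paper does implicitly by estimating $W_0^{-1}(W-W_0)$.
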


\begin{proof}
Consider the matrix-function $W(\la) := V(S(\pi, \la))$. The eigenvalues of $L$ are the zeros of $\det(W(\la))$.
The asymptotics \eqref{asymptCS} yield
\begin{equation} \label{asymptW}
    W(\la) = T \left( \cos \rho \pi + O\biggl( \frac{\exp(|\tau|\pi)}{\rho} \biggr)\right) -
             T^{\perp} \left( \frac{\sin \rho \pi}{\rho} + O\biggl( \frac{\exp(|\tau|\pi)}{\rho^2}\biggr) \right),
   \quad |\rho| \to \iy.
\end{equation}

Define
$$
    W_0(\la) := T \cos \rho \pi - T^{\perp} \frac{\sin \rho \pi}{\rho}.
$$
Obviously, the function $\det(W_0(\la))$ has the zeros $\{ \la_{nk}^0 \}_{n \in \mathbb N, \, k = 1, 2}$,
$\la_{nk}^0 = \left( \rho_{nk}^0 \right)^2$, 
\begin{equation} \label{rhonk0}
\rho_{nk}^0 = n - \frac{1}{2}, \: k = \overline{1, p}, \qquad \rho_{nk}^0 = n, \, k = \overline{p + 1, m}.
\end{equation}

Consider the region $G_{\de} = \{ \rho \in \mathbb C \colon |\rho - (n - 1/2)| \ge \de, \, |\rho - n| \ge \de, \, n \in \mathbb Z \}$
for a fixed $\de$, $0 < \de < 1/4$. Using the standard estimates (see \cite[Section~1.1]{FY01}):
$$
    |\sin \rho \pi| \ge C \exp(|\tau|\pi), \quad |\cos \rho \pi| \ge C \exp(|\tau| \pi), \quad \rho \in G_{\de},
$$
we derive for $\rho \in G_{\de}$, $\la = \rho^2$, that
$$
    W_0^{-1}(\la) (W(\la) - W_0(\la)) = T (\cos \rho \pi)^{-1} O\left( \frac{\exp(|\tau|\pi)}{\rho}\right) +
    T^{\perp} \frac{\rho}{\sin \rho \pi} O\left( \frac{\exp(|\tau|\pi)}{\rho^2} \right) = 
    O\left( \rho^{-1}\right).
$$

Consequently, for sufficiently large $|\la|$, $\sqrt \la = \rho \in G_{\de}$, we have
$$
\| W_0^{-1}(\la)(W(\la) - W_0(\la)) \| < 1.
$$
Applying the matrix Rouche's theorem (Proposition~\ref{prop:Rouche}), we conclude that the functions
$\det (W(\la))$ and $\det (W_0(\la))$ have the same number of zeros in the disks $\{ \la \colon |\la| < (N-1/4) \}$,
$\{ \rho \colon |\rho - (N - 1/2)| < \de \}$ and $\{ \rho \colon |\rho - N| < \de \}$ for sufficiently large values of
$N$ and arbitrary $\de \in (0, 1/4)$.
Taking~\eqref{rhonk0} into account, we arrive at the assertion of the theorem with $\eps_{nk} = o(1)$ as $n \to \infty$.

It remains to prove that $\eps_{nk} = O\left( n^{-1}\right)$ as $n \to \iy$ for $k = \overline{1, m}$. For definiteness,
consider a fixed $k \in \{ 1, \dots, p \}$. The case $k = \overline{p + 1, m}$ is analogous. Substitute $\la = \la_{nk}$ into the 
asymptotic relation~\eqref{asymptW}:
$$
    W(\la_{nk}) = (-1)^n \left( T \bigl(\sin \eps_{nk} \pi + O\bigl(n^{-1}\bigr)\bigr) + T^{\perp} \biggl( 
     \frac{\cos \eps_{nk} \pi}{n} + O\bigl( n^{-1} \bigr)\biggr)\right), \quad n \in \mathbb N.
$$
Consequently, we obtain
$$
    \det (W(\la_{nk})) = \frac{(-1)^{nm}}{n^{m - p}} P_{nk}(\sin \eps_{nk} \pi) = 0,
$$
where $P_{nk}(x)$ is a polynomial of degree $p$ with the coefficients depending on $n$ and $\eps_{nk}$.
One can easily check that this polynomial satisfies the conditions of Proposition~\ref{prop:poly} with
$\de_n = \eps_{nk}$ and $\ga_n = n^{-1}$. Applying Proposition~\ref{prop:poly},
we conclude that $\eps_{nk} = O\left( n^{-1}\right)$, $n \to \iy$.
\end{proof}

Below the symbol $C$ is used for various positive constants.

\begin{proof}[Proof of Theorem~\ref{thm:asymptla}]
Theorem~\ref{thm:asymptla} follows from Lemma~\ref{lem:rough}, if we derive the asymptotics~\eqref{asymptla1} and~\eqref{asymptla2}
from \eqref{rough}. We focus on the proof of~\eqref{asymptla2}, since it is more complicated.

We need to obtain more precise asymptotics for the zeros of the function $W(\rho^2)$, than~\eqref{asymptla1}.
Instead of the $\rho$-plane, we will work with a new $z$-plane, obtained by the mapping
$$
    \rho_n(z) := n + \frac{z}{\pi n}.
$$
Here $z$ lies in the circle $|z| \le r$ of a sufficiently large radius $r > 0$. 
Below we use the notation $\{ K_n(z) \}_{n \in \mathbb N}$ for various sequences of matrix functions, such that
$$
\sum_{n = 1}^{\iy} \| K_n(z) \|^2 \le C, \quad |z| \le r, 
$$
where the constant $C$ depends on a sequence $\{ K_n(z) \}_{n \in \mathbb N}$, but does not depend on $z$.

One can easily obtain the relations
\begin{equation} \label{cs}
\cos (\rho_n(z) \pi) = (-1)^n \bigl( 1 + O\left( n^{-2} \right) \bigr), \quad
\sin (\rho_n(z) \pi) = \frac{(-1)^n z}{n} \bigl( 1 + O\left( n^{-2} \right) \bigr), \quad n \to \iy, 
\end{equation}
where the estimates $O\left( n^{-2}\right)$ are uniform with respect to $z \colon |z| \le r$.

Using~\eqref{asymptCS} and~\eqref{cs}, we get the asymptotics
\begin{equation} \label{asymptWz}
   W(\rho_n^2(z)) = (-1)^n \left( T \biggl( I_m + \frac{K_n(z)}{n}\biggr) - T^{\perp} \biggl(\frac{z   I_m - \om}{n^2}
   + \frac{K_n(z)}{n^2} \biggr) \right).
\end{equation}
Note that the terms with $T$ and $T^{\perp}$ have different powers of $n$. In order to overcome this difficulty,
we introduce the function
\begin{equation} \label{defRn}
    R_n(z) := (-1)^n (T + n^2 T^{\perp}) W(\rho_n^2(z)).
\end{equation}
Clearly,
\begin{equation} \label{asymptRn}
    R_n(z) = T \left( I_m + \frac{K_n(z)}{n} \right) - T^{\perp} (z   I_m - \om + K_n(z)).
\end{equation}

Consider the main part
\begin{equation} \label{defR}
     R(z) := T - T^{\perp} (z   I_m - \om).
\end{equation}
In the block-matrix form, we have
\begin{equation} \label{blockR}
    R(z) = \begin{bmatrix} I_p & 0 \\ \om_{21} & -(z   I_{m-p} - \om_{22})\end{bmatrix}.
\end{equation}
The determinant $\det(R(z)) = (-1)^{m-p}\det(z   I_{m - p} - \om_{22})$ has the real zeros
$\{ z_k \}_{k = p + 1}^m$, being the eigenvalues of the Hermitian matrix $\om_{22} = \om_{22}^{\dagger}$.
Define the region 
$$
Z_{\de} := \{ z \in \mathbb C \colon |z| \le r, \, |z - z_k| \ge \de, \, k = \overline{p + 1, m} \},
$$ 
where $\de > 0$ is so small that $|z_k - z_l| < \de$ for all $l \ne k$, $l, k = \overline{p + 1, m}$.
The relation~\eqref{blockR} yields
\begin{equation} \label{invR}
R^{-1}(z) = \begin{bmatrix}
                   I_p & 0 \\ (z   I_{m-p} - \om_{22})^{-1} \om_{21} & -(z   I_{m - p} - \om_{22})^{-1} 
                \end{bmatrix}.    
\end{equation}

Consequently, $\| R^{-1}(z) \| \le C$ for $z \in Z_{\de}$. The relations~\eqref{asymptRn} and~\eqref{defR} imply
$R_n(z) - R(z) = K_n(z)$, $n \in \mathbb N$. Hence
$$
    \| R^{-1}(z) \| \cdot \| R_n(z) - R(z) \| < 1
$$
for sufficiently large $n$ and $z \in Z_{\de}$. Applying Proposition~\ref{prop:Rouche}, we conclude that,
for sufficiently large $n$, the function $\det(R(z))$ has exactly $(m - p)$ zeros $\{ z_{nk} \}_{k = \overline{p + 1, m}}$ 
(counted according to their multiplicities),
having the asymptotics 
\begin{equation} \label{znk}
    z_{nk} = z_k + \epsilon_{nk}, \quad \epsilon_{nk} = o(1), \quad n \to \iy, \quad k = \overline{p + 1, m}.
\end{equation}

Introduce the unitary matrices $\mathcal U = (\mathcal U^{\dagger})^{-1} \in \mathbb C^{(m - p) \times (m - p)}$ and 
$\tilde{\mathcal U} \in \mathbb C^{m \times m}$,
such that
\begin{equation} \label{defU}
    \mathcal U \om_{22} \mathcal U^{\dagger} = D := \diag\{ z_k \}_{k = p + 1}^m, \quad
    \tilde {\mathcal U} := \begin{bmatrix} I_p & 0 \\ 0 & \mathcal U \end{bmatrix}.
\end{equation}
Using~\eqref{asymptRn}, we obtain
$$
    \tilde {\mathcal U} R_n(z) \tilde {\mathcal U}^{\dagger} = 
    \begin{bmatrix} I_p & 0 \\ \mathcal U \om_{21} & -(z   I_{m-p} - D) \end{bmatrix} + K_n(z).
$$
Applying the Gaussian elimination, we obtain the following matrix:
\begin{equation} \label{tRn}
     H_n(z) := \begin{bmatrix} I_p & 0 \\ 0 & -(z   I_{m-p} - D) \end{bmatrix} + K_n(z).
\end{equation}

Fix $k \in \{ p + 1, \dots, m \}$.
Let $m_k$ be a multiplicity of the zero $z_k$ of the function $\det (R(z))$:
$m_k = \# \{ s \colon z_s = z_k, \, p + 1 \le s \le m \}$.
Using~\eqref{znk} and~\eqref{tRn}, we derive that $\det(H_n(z_{nk})) = \mathcal P_{nk}(\epsilon_{nk})$,
where $\mathcal P_{nk}$ is a polynomial of degree $m_k$ with coefficients, depending on $n$ and $\epsilon_{nk}$ and satisfying Proposition~\ref{prop:poly}
with $\de_n = \epsilon_{nk}$, $\{ \ga_n \} \in l_2$. Proposition~\ref{prop:poly} yields
$\{ \epsilon_{nk} \} \in l_2$. By construction, the zeros of $\det(H_n(z))$ coincide with the zeros of
$W(\rho_n^2(z))$. Returning to the $\rho$-plane, we obtain~\eqref{asymptla2}.

The proof of the relation~\eqref{asymptla1} is analogous, so we omit the details.
In order to prove~\eqref{asymptla1}, one should consider the mapping
$$
    \tilde \rho_n(z) := n - \frac{1}{2} + \frac{z}{\pi (n - 1/2)}, \quad |z| \le r,
$$
and the matrix function
$$
    \tilde R_n(z) := (-1)^n \left( n - \frac{1}{2}\right) W(\rho_n^2(z))
    = T (z    I_m - \om + K_n(z)) + T^{\perp} \left( I_m + \frac{K_n(z)}{n}\right)
$$
instead of $\rho_n(z)$ and $R_n(z)$, respectively.
\end{proof}

\section{Asymptotics of weight matrices}

The goal of this section is to derive asymptotic formulas for the weight matrices $\{ \al_{nk} \}$, defined by~\eqref{defal}.
The main difficulty in our investigation is the complicated behavior of the spectrum, which can contain infinitely many
multiple and/or asymptotically multiple eigenvalues.
Indeed, if there are equal numbers among $\{ z_k \}_{k = 1}^p$ or $\{ z_k \}_{k = p + 1}^m$ in Theorem~\ref{thm:asymptla},
the corresponding eigenvalue subsequences $\{ \la_{nk} \}_{n \in \mathbb N}$ have the same asymptotics for different values of $k$.
In the latter case, one cannot obtain separate asymptotic formulas for $\al_{nk}$, being the residuals of the Weyl matrix.
In order to manage with this problem, the approach of the papers \cite{Bond16, Bond17, Bond19} is developed. We group eigenvalues by asymptotics
and derive asymptotic formulas for the sums of the weight matrices, corresponding to each group.

Note that, if $\lambda_{n_1, k_1} = \la_{n_2, k_2} = \ldots = \la_{n_l, k_l}$, where $(n_1, k_1) < (n_2, k_2) < \ldots < (n_l, k_l)$ is a group of multiple eigenvalues of the size $l$, then $\al_{n_1, k_1} = \al_{n_2, k_2} = \ldots = \al_{n_l, k_l}$. For every such group, introduce the notations $\al'_{n_1,k_1} = \al_{n_1, k_1}$, $\al'_{n_j, k_j} = 0$, $j = \overline{2, l}$.
Define
\begin{gather*}
     \al_n^{(s)} = \sum_{\substack{k = \overline{1, p} \\ z_k = z_s}} \al'_{nk}, \quad s = \overline{1, p}, \qquad 
     \al_n^{(s)} = \sum_{\substack{k = \overline{p+1, m} \\ z_k = z_s}} \al'_{nk}, \quad s = \overline{p+1, m}, \\
    \al_n^I = \sum_{k = 1}^p \al'_{nk}, \qquad \al_n^{II} = \sum_{k = p + 1}^m \al'_{nk}.
\end{gather*}

Let us derive asymptotic formulas for $\alpha_n^{(s)}$, $s = \overline{1, m}$, $\al_n^I$ and $\al_n^{II}$ as $n \to \iy$. The final results are formulated in Theorems~\ref{thm:asymptals} and~\ref{thm:asymptal}.
We start from the following standard formula for the Weyl solution:
$$
   \Phi(x, \la) = C(x, \la) + S(x, \la) M(\la).
$$
Since $V(\Phi) = 0$, we have
\begin{equation} \label{formM}
   M(\la) = -V^{-1}(S) V(C).
\end{equation}

First, we derive asymptotic formulas for $\al_n^{(s)}$. For definiteness, fix $s \in \{ p + 1, \dots, m\}$, since we have focused on this case in Section~2.
Using \eqref{defal}, \eqref{formM} and the residue theorem, we obtain
\begin{equation} \label{intalns}
    \al_n^{(s)} = -\frac{1}{2 \pi i} \int_{\Gamma_n^{(s)}} M(\la) \, d\la = \frac{1}{2 \pi i} \int_{\Gamma_n^{(s)}} W^{-1}(\la) V(C(x, \la)) \, d\la
\end{equation}
for sufficiently large $n$. Here $\Gamma_n^{(s)}$ is a contour in $\la$-plane, 
enclosing the eigenvalues $\{ \la_{nk} \}_{k = \overline{p+1, m} \colon z_k = z_s}$.

Further we develop the ideas of the proof of Theorem~\ref{thm:asymptla}. We use the change of variables 
$\la = \rho_n^2(z)$, $|z - z_s| = \de$, where $\de > 0$ is sufficiently small.
Clearly,
\begin{equation} \label{dla}
    d\la = \frac{2}{\pi n} \rho_n(z) dz = \frac{2}{\pi} \left(1 + O\bigl( n^{-2}\bigr) \right) dz.
\end{equation}

In view of~\eqref{defRn}, we have
\begin{equation} \label{sm1}
    W^{-1}(\rho_n^2(z)) = (-1)^n R_n^{-1}(z) (T + n^2 T^{\perp}).
\end{equation}
Since $R_n(z) = R(z) + K_n(z)$ and $\| R^{-1}(z)\| \le C$ for $|z - z_s| = \de$, we get
\begin{equation} \label{sm2}
    R_n^{-1}(z) = R^{-1}(z) + K_n(z), \quad |z - z_s| = \de.
\end{equation}

Using~\eqref{asymptCS} and \eqref{cs}, we obtain the asymptotic relation
\begin{equation} \label{asymptVC}
   V(C(x, \rho_n^2(z))) = (-1)^n \left( T(-z   I_m + \om - H + K_n(z)) - T^{\perp} \biggl( I_m + \frac{K_n(z)}{n}\biggr) \right).
\end{equation}

Combining the relations~\eqref{sm1}, \eqref{sm2} and \eqref{asymptVC} all together, we get
\begin{equation} \label{sm3}
    W^{-1}(\rho_n^2(z)) V(C(x, \rho_n^2(z))) = -n^2 R^{-1}(z) (T^{\perp} + K_n(z)).
\end{equation}
Substituting~\eqref{dla} and~\eqref{sm3} into \eqref{intalns}, we obtain the relation
$$
   \al_n^{(s)} = -\frac{2 n^2}{\pi} \left( \frac{1}{2\pi i} \int\limits_{|z - z_s| = \de} R^{-1}(z) T^{\perp} dz + K_n\right).
$$
Here and below the notation $\{ K_n \}$ is used for various matrix sequences, independent of $z$ and such that
$\{ \| K_n(z)\| \} \in l_2$.

Using the relation~\eqref{invR}, we get
$$
-R^{-1}(z) T^{\perp} = \begin{bmatrix} 0 & 0 \\ 0 & (z   I_{m-p} - \om_{22})^{-1}\end{bmatrix}
$$
The residue theorem yields
$$
-\frac{1}{2\pi i} \int\limits_{|z - z_s|=r} R^{-1}(z) T^{\perp}dz = \Res_{z = z_s}
\begin{bmatrix} 0 & 0 \\ 0 & (z   I_{m-p} - \om_{22})^{-1} \end{bmatrix}.
$$

In view of~\eqref{defU}, we have $\om_{22} = \mathcal U^{\dagger} D \mathcal U$. Hence
\begin{equation} \label{res}
\Res_{z = z_s} (z   I_{m-p} - \om_{22})^{-1} = \mathcal U^{\dagger} \Res_{z = z_s} (z   I_{m-p} - D)^{-1} \mathcal U = \mathcal U^{\dagger} T_s \mathcal U, 
\end{equation}
where 
$$
T_s = [T_{s, jk}]_{j, k = 1}^{m-p}, \quad T_{s, jk} = \begin{cases} 1, \quad j = k, \, z_{j+p} = z_s\\ 0, \quad \text{otherwise}. \end{cases}
$$
Although the choice of the matrix $\mathcal U$, satisfying~\eqref{defU}, is not unique, the result of~\eqref{res} does not depend on $\mathcal U$. Introduce the matrix 
\begin{equation} \label{As1}
A^{(s)} = \begin{bmatrix} 0 & 0 \\ 0 & \mathcal U^{\dagger} T_s \mathcal U\end{bmatrix}, \quad s = \overline{p + 1, m}.
\end{equation}
Finally, we arrive at the relation
\begin{equation} \label{alns1}
    \alpha_n^{(s)} = \frac{2n^2}{\pi} (A^{(s)} + K_n), \quad s = \overline{p + 1, m}.
\end{equation}
Similarly, we get
\begin{equation} \label{alns2}
    \alpha_n^{(s)} = \frac{2 (n -1/2)^2}{\pi} (A^{(s)} + K_n), \quad s = \overline{1, p},
\end{equation}
where
\begin{equation} \label{As2}
A^{(s)} = \begin{bmatrix}
\mathcal V^{\dagger} T_s \mathcal V & 0 \\ 0 & 0
\end{bmatrix},
\quad T_s = [T_{s, jk}]_{j, k = 1}^p, \quad
T_{s, jk} = \begin{cases}
                1, \quad j = k, \, z_j = z_s,\\
                0, \quad \text{otherwise}, 
            \end{cases}
\quad s = \overline{1, p},
\end{equation}
and $\mathcal V = (\mathcal V^{-1})^{\dagger} \in \mathbb C^{p \times p}$ is a unitary matrix, such that $\mathcal V (\om_{11} - H) \mathcal V^{\dagger} = E := \diag\{z_k\}_{k = 1}^p$.

Thus, we arrive at the following theorem.

\begin{thm} \label{thm:asymptals}
The matrix sequences $\{ \al_n^{(s)} \}_{n \in \mathbb N}$, $s = \overline{1, m}$, satisfy the asymptotic relations~\eqref{alns1} and~\eqref{alns2}, where the matrices $A^{(s)}$, $s = \overline{1, m}$, are defined by~\eqref{As1} and~\eqref{As2}.
\end{thm}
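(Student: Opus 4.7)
My plan is to start from the residue definition \eqref{defal} and express each sum $\al_n^{(s)}$ as a contour integral. Using the formula $M(\la) = -V^{-1}(S)V(C) = W^{-1}(\la) V(C(x,\la))$ with an appropriate sign, I would choose a contour $\Gamma_n^{(s)}$ in the $\la$-plane enclosing exactly those eigenvalues $\la_{nk}$ in the $s$-th asymptotic cluster (which exists for sufficiently large $n$ by Theorem~\ref{thm:asymptla}) and write
\[
   \al_n^{(s)} = \frac{1}{2\pi i} \int_{\Gamma_n^{(s)}} W^{-1}(\la) V(C(x,\la))\, d\la.
\]

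Next, I would pass from the $\la$-plane to the auxiliary $z$-plane using the same conformal change of variable as in the proof of Theorem~\ref{thm:asymptla}, namely $\la = \rho_n^2(z)$ for $s \ge p+1$ (and $\la = \tilde\rho_n^2(z)$ for $s \le p$), so that $\Gamma_n^{(s)}$ can be deformed to a small fixed circle $|z - z_s| = \de$. Computing $d\la$ gives the factor $\tfrac{2}{\pi}(1 + O(n^{-2}))$. The key point is then to obtain an effective asymptotic expansion for $W^{-1}(\la)V(C(x,\la))$ uniformly on this circle; for this I would exploit the already-introduced rescaling $R_n(z) = (-1)^n(T + n^2 T^\perp)W(\rho_n^2(z))$, because applying $(T + n^2 T^\perp)$ from the left pulls the $T^\perp$-block up to the same order as the $T$-block and makes $R_n(z) = R(z) + K_n(z)$ with $R^{-1}(z)$ uniformly bounded on $Z_\de$. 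Inverting yields $W^{-1}(\rho_n^2(z)) = (-1)^n R_n^{-1}(z)(T + n^2 T^\perp)$, and a parallel expansion of $V(C(x,\la))$ from \eqref{asymptCS} and \eqref{cs} produces the two leading blocks.

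Multiplying the two expansions, I expect the $T$-block of $V(C)$ (which carries no $n^{-1}$ factor) to be killed against the structure of $R^{-1}(z)$ after the cancellation with $(-1)^n$, leaving a clean main term $-n^2 R^{-1}(z)T^\perp$ plus an $l_2$-small remainder $K_n(z)$ uniform in $z$. Substituting back and extracting the factor $\tfrac{2n^2}{\pi}$ reduces the whole computation to evaluating the residue at $z = z_s$ of $-R^{-1}(z)T^\perp$. From the block form \eqref{invR} this residue is $(z I_{m-p} - \om_{22})^{-1}$ in the lower-right block; diagonalizing $\om_{22} = \mathcal U^\dagger D \mathcal U$ and reading off the residue coordinate-by-coordinate gives exactly the matrix $A^{(s)}$ of \eqref{As1}. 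The case $s \le p$ is handled analogously with the rescaling $\tilde R_n(z)$, where this time the factor $(n - 1/2)^2$ replaces $n^2$ and the role of $\om_{22}$ is played by $\om_{11} - H$, producing $A^{(s)}$ as in \eqref{As2}.

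The main obstacle, as in the proof of the eigenvalue asymptotics, is the mismatched orders of magnitude of the $T$-block and $T^\perp$-block in $W(\la)$ on the contour, which prevents direct inversion. I expect the resolution to be precisely the rescaling $R_n(z)$ from Section~2 and the fact that $V(C(x,\la))$ has a completely symmetric block decomposition, so that the $(T + n^2 T^\perp)$ normalization produces the cancellation needed to isolate the single leading term $-n^2 R^{-1}(z) T^\perp$. A secondary technical point will be justifying that all the remainders absorbed into the symbol $K_n(z)$ genuinely form $l_2$-sequences uniformly in $z$ on the fixed circle $|z - z_s| = \de$; this should follow from the $K(\rho)$-clauses in \eqref{asymptCS} together with a Parseval-type argument, and requires no new idea beyond those already present in the proof of Theorem~\ref{thm:asymptla}.
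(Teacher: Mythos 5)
Your proposal follows essentially the same route as the paper's own derivation: the residue/contour-integral representation of $\al_n^{(s)}$ via $M(\la) = -W^{-1}(\la)V(C(x,\la))$, the change of variables $\la = \rho_n^2(z)$ onto a fixed circle $|z - z_s| = \de$, the rescaling $R_n(z) = (-1)^n(T + n^2 T^{\perp})W(\rho_n^2(z))$ to equalize the block orders, the reduction to the leading term $-n^2 R^{-1}(z)T^{\perp}$ (the $T$-block of $V(C)$ being demoted to lower order rather than literally cancelled, a harmless imprecision in your sketch), and the residue computation via the diagonalization $\om_{22} = \mathcal U^{\dagger} D \mathcal U$ yielding $A^{(s)}$, with the symmetric treatment of $s = \overline{1,p}$. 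This matches the paper's proof in both structure and all key steps.
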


The definitions~\eqref{As1} and~\eqref{As2} immediately imply the corollary.

\begin{cor}
The matrices $A^{(s)}$ in the asymptotics~\eqref{alns1} and~\eqref{alns2} are Hermitian, non-negative definite: $A^{(s)} = \left( A^{(s)} \right)^{\dagger} \ge 0$, $s = \overline{1, m}$. Furthermore, 
\begin{align*}
 \rank (A^{(s)}) & = \# \{k = \overline{1, p} \colon z_k = z_s \}, \quad s=\overline{1, p}, \\
 \rank (A^{(s)}) & = \# \{ k = \overline{p +1, m} \colon z_k = z_s \}, \quad
 s=\overline{p + 1, m}.
\end{align*}
\end{cor}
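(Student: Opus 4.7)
The corollary follows almost immediately from the block structure of the matrices $A^{(s)}$ given in \eqref{As1} and \eqref{As2}, so my plan is to verify the three properties (Hermiticity, non-negativity, rank) one after another by direct inspection.

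First, I would observe that in both \eqref{As1} and \eqref{As2} the matrix $T_s$ is a diagonal $0$-$1$ matrix. Hence $T_s = T_s^{\dagger}$ and $T_s = T_s^2$, which means $T_s$ is an orthogonal projection, in particular Hermitian and non-negative definite. Conjugation by a unitary matrix $\mathcal U$ or $\mathcal V$ preserves both properties, so $\mathcal U^{\dagger} T_s \mathcal U$ and $\mathcal V^{\dagger} T_s \mathcal V$ are Hermitian and non-negative definite. Embedding such a block diagonally together with zero blocks preserves these properties as well, yielding $A^{(s)} = (A^{(s)})^{\dagger} \ge 0$.

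Next, for the rank, I would use the fact that unitary conjugation does not change the rank, and that the rank of a block-diagonal matrix is the sum of the ranks of its diagonal blocks. Thus for $s = \overline{p+1,m}$ we have $\rank(A^{(s)}) = \rank(\mathcal U^{\dagger} T_s \mathcal U) = \rank(T_s)$, and the latter equals the number of $1$'s on the diagonal of $T_s$, which by the definition in \eqref{As1} is exactly $\#\{j = \overline{1,m-p} \colon z_{j+p} = z_s\} = \#\{k = \overline{p+1,m} \colon z_k = z_s\}$. The case $s = \overline{1,p}$ is handled identically using \eqref{As2}.

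Since all three statements follow directly from the definitions \eqref{As1}, \eqref{As2} together with standard properties of diagonal $0$-$1$ matrices and unitary conjugation, I do not anticipate any genuine obstacle; the only thing to be careful about is correctly translating between the indexing of the $T_s$-blocks (which range over $\overline{1,m-p}$ or $\overline{1,p}$) and the global indexing of the eigenvalues $\{z_k\}$ appearing in Theorem~\ref{thm:asymptla}.
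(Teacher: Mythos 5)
Your proposal is correct and coincides with the paper's own treatment: the paper simply states that the definitions \eqref{As1} and \eqref{As2} immediately imply the corollary, and your verification (that $T_s$ is a diagonal $0$-$1$ matrix, hence an orthogonal projection whose Hermiticity, non-negativity and rank are preserved under unitary conjugation and block-diagonal embedding) is exactly the intended argument. Your care with the index shift $z_{j+p}=z_s$ versus $z_k=z_s$ for $s=\overline{p+1,m}$ is the only nontrivial bookkeeping, and you handle it correctly.
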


The following corollary is important for inverse problem theory.

\begin{cor}
The spectral data $\{ \la_{nk}, \al_{nk} \}_{n \in \mathbb N, \,k = \overline{1, m}}$ uniquely specify the matrices $(\om_{11}-H)$ and $\om_{22}$.
\end{cor}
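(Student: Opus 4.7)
My plan is to extract $(\om_{11}-H)$ and $\om_{22}$ algorithmically from the spectral data by combining the asymptotic formulas of Theorems~\ref{thm:asymptla} and~\ref{thm:asymptals} with the spectral theorem for Hermitian matrices.

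First, I would use the eigenvalue asymptotics to recover the two spectra. Since the subsequences $\{\rho_{nk}\}_{n \in \mathbb N}$ for $k \le p$ and for $k > p$ accumulate near $n-\tfrac12$ and near $n$ respectively, the integer $p$ and the splitting of the spectrum into two families are detected from the data itself. For each fixed $k$, the constant $z_k$ in~\eqref{asymptla1}, \eqref{asymptla2} appears as
$$
z_k = \lim_{n\to\iy}\pi\bigl(n-\tfrac12\bigr)\bigl(\rho_{nk}-(n-\tfrac12)\bigr), \: k \le p, \qquad z_k = \lim_{n\to\iy}\pi n\,(\rho_{nk}-n), \: k > p.
$$
By Theorem~\ref{thm:asymptla}, these are the eigenvalues of $(\om_{11}-H)$ and of $\om_{22}$ respectively, each counted with multiplicity, so both spectra are recovered from the data.

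Second, I would use the weight matrix asymptotics to recover the spectral projectors. With the coincidence pattern among the $z_k$'s now known, the sums $\al_n^{(s)}$ defined in Section~3 are computable from $\{\al_{nk}\}$. Theorem~\ref{thm:asymptals} then gives
$$
A^{(s)} = \lim_{n\to\iy}\frac{\pi}{2(n-\tfrac12)^2}\al_n^{(s)}, \: s \le p, \qquad A^{(s)} = \lim_{n\to\iy}\frac{\pi}{2n^2}\al_n^{(s)}, \: s > p.
$$
Inspecting \eqref{As1}--\eqref{As2} together with the definition~\eqref{defU}, the nonzero block of $A^{(s)}$ is precisely the orthogonal spectral projector of $\om_{22}$ (for $s>p$) or of $(\om_{11}-H)$ (for $s \le p$) onto its $z_s$-eigenspace. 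Thus all spectral projectors of both matrices are recovered.

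Finally, the spectral theorem for Hermitian matrices yields
$$
\om_{22} = \sum_{z} z\,P_z, \qquad \om_{11}-H = \sum_{z} z\,Q_z,
$$
where the sums run over the distinct eigenvalues of each matrix and the projectors $P_z$, $Q_z$ are the ones extracted above. Hence both matrices are uniquely determined by the spectral data. The argument is essentially a routine consequence of the two preceding theorems; the only conceptual point worth highlighting as an obstacle is that the definition of the grouped sums $\al_n^{(s)}$ requires knowing which $z_k$'s coincide, but this multiplicity structure is itself read off from the eigenvalue asymptotics, so no information beyond $\{\la_{nk},\al_{nk}\}$ is ever invoked.
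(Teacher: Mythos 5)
Your proposal is correct and follows essentially the same route as the paper: the paper's proof recovers the $z_k$ from the eigenvalue asymptotics and the $A^{(s)}$ from the weight-matrix asymptotics, and then reassembles $(\om_{11}-H)$ and $\om_{22}$ via the identity $\sum_{s}z_sA^{(s)}=\diag(\om_{11}-H,\,\om_{22})$ (summed over one representative $s$ per distinct $z$-value), which is exactly the spectral-theorem decomposition you invoke. Your write-up is merely more explicit about how the limits are extracted and how the multiplicity structure needed to form the sums $\al_n^{(s)}$ is itself read off from the data.
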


\begin{proof}
Consider the set of indices
$$
\mathcal S := \{ s =\overline{1, p} \colon s = 1 \: \text{or} \: z_s \ne z_{s-1} \} \cup \{ s= \overline{p + 1, m} \colon s = p+1 \: \text{or} \: z_s \ne z_{s-1}\}. 
$$
Using~\eqref{As1} and~\eqref{As2}, we obtain
\begin{equation} \label{sums}
\sum_{s \in \mathcal S} z_s A^{(s)}  = \begin{bmatrix} 
\mathcal V^{\dagger} E \mathcal V & 0 \\ 0 & \mathcal U^{\dagger} D \mathcal U\end{bmatrix} = \begin{bmatrix} \om_{11} - H & 0 \\ 0 & \om_{22}\end{bmatrix}.
\end{equation}
Thus, one can determine the constants $\{ z_k \}_{k = 1}^m$ and the matrices $\{ A^{(s)} \}_{s = 1}^m$ from the asymptotic formulas \eqref{asymptla1}, \eqref{asymptla2} and \eqref{alns1}, \eqref{alns2}, respectively, and then find $(\om_{11} - H)$ and $\om_{22}$ from~\eqref{sums}.
\end{proof}

Let us proceed to derivation of asymptotic formulas for $\al_n^I$ and $\al_n^{II}$. Theorem~\ref{thm:asymptals} yields
\begin{equation} \label{alrough}
    \al_n^I = \frac{2 (n - 1/2)^2}{\pi} (T + K_n), \quad
    \al_n^{II} = \frac{2n^2}{\pi} (T^{\perp} + K_n).
\end{equation}
However, we can obtain better estimates for the remainder terms in  \eqref{alrough}. For this purpose, we use \eqref{asymptCS} and~\eqref{cs} to get a more precise asymptotics for $W(\rho_n^2(z))$, than \eqref{asymptWz}:
$$
W(\rho_n^2(z)) = (-1)^n \left( T \biggl( I_m + \frac{K_n(z)}{n} - n^{-2} T^{\perp} \biggl( z   I_m - \om + \hat Q_{2n} + \frac{K_n(z)}{n}\biggr)\right),
$$
where
$$
\hat Q_{2 n} := \frac{1}{2} \int_0^{\pi} Q(t) \cos (2 n t) \, dt.
$$
Consequently, the matrix-functions $R_n(z)$, defined by~\eqref{defRn}, can be represented in the form
$$
R_n(z) = R_n^0(z) + \frac{K_n(z)}{n}, \quad R_n^0(z) := T - T^{\perp} (z   I_m - \om + \hat Q_{2n}).
$$

Choose $r$ such that $|z_k| < r$, $k = \overline{p + 1, m}$.
One can easily show that for $|z| = r$ the following relation holds:
\begin{equation} \label{invRn0}
R_n^{-1}(z) T^{\perp} = \left(R_n^0(z)\right)^{-1} T^{\perp} + \frac{K_n(z)}{n} = 
\begin{bmatrix}
 0 & 0 \\ 0 & -(z   I_{m-p} - \om_{22} + \hat Q_{2n})^{-1}
\end{bmatrix} + \frac{K_n(z)}{n}.
\end{equation}

It follows from \eqref{defal}, \eqref{formM} and the residue theorem, that
\begin{equation} \label{intalII}
\al_n^{II} = -\frac{1}{2\pi i} \int_{\Gamma_n} M(\la) \, d\la =
\frac{1}{2\pi i} \int_{\Gamma_n} W(\la) V(C(x, \la)) \, d\la,
\end{equation}
where $\Gamma_n$ is a contour in $\la$-plane, enclosing the eigenvalues $\{ \la_{nk} \}_{k = p + 1}^m$.

Combining \eqref{dla}, \eqref{sm1}, \eqref{asymptVC}, \eqref{invRn0} and \eqref{intalII}, we obtain the relation
\begin{equation} \label{alII}
\al_n^{II} = -\frac{2 n^2}{\pi} \left( \frac{1}{2\pi i} \int\limits_{|z| = r} \bigl( R_n^0(z)\bigr)^{-1} T^{\perp} dz + \frac{K_n}{n} \right).
\end{equation}
Since $\hat Q_{2n} \to 0$ as $n \to \iy$, the matrix-function $\left( R_n^0(z) \right)^{-1}$ for sufficiently large $n$ has the poles $\{ z_{nk}^0 \}_{k = p + 1}^m$, such that $z_{nk}^0 \to z_k$ as $n \to \iy$, $k = \overline{p + 1, m}$. Hence
\begin{equation} \label{limr}
\frac{1}{2 \pi i} \int\limits_{|z| = r} \left( R_n^0(z)\right)^{-1} T^{\perp} dz = \frac{1}{2 \pi i} \lim_{r \to \iy} \int\limits_{|z| = r} \left( R_n^0(z)\right)^{-1} T^{\perp} dz.
\end{equation}
It follows from~\eqref{invRn0}, that
\begin{equation} \label{asymptz}
\left( R_n^0(z)\right)^{-1} T^{\perp} = -z^{-1} \left( I_m + O\bigl( z^{-1}\bigr)\right) T^{\perp}, \quad |z| \to \iy.
\end{equation}
Substituting~\eqref{asymptz} into \eqref{limr}, we get
\begin{equation} \label{resint}
\frac{1}{2 \pi i} \int\limits_{|z| = r} \left( R_n^0(z)\right)^{-1} T^{\perp} dz = -\frac{1}{2\pi i} \int\limits_{|z| = r} \frac{T^{\perp}}{z} \, dz = -T^{\perp}.
\end{equation}
Substituting~\eqref{resint} into~\eqref{alII}, we obtain the asymptotic relation for $\al_n^{II}$. Similarly we derive an asymptotic formula for $\al_n^I$ and, finally, arrive at the following theorem.

\begin{thm} \label{thm:asymptal}
The following asymptotic relations hold
$$
\al_n^I = \frac{2 (n - 1/2)^2}{\pi} \left( T + \frac{K_n}{n}\right), \quad
\al_n^{II} = \frac{2 n^2}{\pi} \left( T^{\perp} + \frac{K_n}{n}\right), \quad
n \in \mathbb N.
$$
\end{thm}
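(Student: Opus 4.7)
The plan is to prove the theorem by refining the contour-integral computation already used for $\al_n^{(s)}$ in \eqref{alns1}--\eqref{alns2}. Theorem~\ref{thm:asymptals} applied termwise yields only the rough version \eqref{alrough}; the sharpening from a $K_n$ to a $K_n/n$ remainder requires integrating $W^{-1}(\la) V(C(x,\la))$ over a single contour enclosing \emph{all} eigenvalues of the given type at once, and keeping one more order in the expansion of $W(\rho_n^2(z))$. The structural reason this works is that the ``nuisance'' Fourier coefficient $\hat Q_{2n} := \tfrac12 \int_0^\pi Q(t) \cos(2nt)\,dt$ that appears at this next order is itself an $l_2$-sequence by Bessel's inequality applied to $q_{jk} \in L_2(0,\pi)$, so it is absorbed into the notation $K_n$.

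For $\al_n^{II}$, I would start from \eqref{intalII}, change variables by $\la = \rho_n^2(z)$ with $\Gamma_n$ pulled back to $|z|=r$ for a large fixed $r > \max_k |z_k|$, and refine \eqref{asymptWz} by retaining the $\hat Q_{2n}$ contribution from the integral $\int_0^\pi \cos\rho(\pi-2t)Q(t)\,dt$ in \eqref{asymptCS} (using $\cos(\rho_n(z)\pi) = (-1)^n(1+O(n^{-2}))$). This produces
\[
R_n(z) = R_n^0(z) + \frac{K_n(z)}{n}, \qquad R_n^0(z) := T - T^{\perp}(z I_m - \om + \hat Q_{2n}),
\]
and since $\hat Q_{2n} \to 0$, $(R_n^0)^{-1}(z)$ is uniformly bounded on $|z|=r$ for large $n$, giving the improved identity $R_n^{-1}(z) T^{\perp} = (R_n^0(z))^{-1} T^{\perp} + K_n(z)/n$. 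Combining with \eqref{dla}, \eqref{sm1} and the $K_n/n$-refined version of \eqref{asymptVC}, the integrand collapses to $-n^2\bigl((R_n^0(z))^{-1} T^{\perp} + K_n(z)/n\bigr)$. The main contribution is evaluated by deforming $|z|=r$ out to infinity: $R_n^0(z)$ has no poles in $\mathbb C$ other than the $m-p$ points close to $\{z_k\}_{k=p+1}^m$, all enclosed by $|z|=r$, and $(R_n^0(z))^{-1} T^{\perp} = -z^{-1} T^{\perp} + O(z^{-2})$ as $|z| \to \iy$. The residue at infinity equals $-T^{\perp}$, producing $\al_n^{II} = \frac{2n^2}{\pi}(T^{\perp} + K_n/n)$.

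For $\al_n^I$, I would run the symmetric argument with the mapping $\tilde\rho_n(z) = n - 1/2 + z/(\pi(n-1/2))$ and the analogue $\tilde R_n^0(z)$ in which the roles of $T$ and $T^{\perp}$, and of $\om_{22}$ and $\om_{11}-H$, are swapped (cf.\ the end of the proof of Theorem~\ref{thm:asymptla}); the residue-at-infinity computation then yields $T$ instead of $T^{\perp}$. The main obstacle is the bookkeeping: one must verify that every $O(n^{-1})$ or $O(n^{-2})$ term arising from \eqref{asymptCS} and \eqref{cs} either contributes explicitly (as $\hat Q_{2n}$ does) or is of the form $K_n(z)/n$ with $\{\|K_n(z)\|\}\in l_2$ uniformly in $z$, and that the deformation of the contour to infinity does not capture any extra poles for large $n$. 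Once these checks are in place, both asymptotic formulas follow immediately.
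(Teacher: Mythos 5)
Your proposal is correct and follows essentially the same route as the paper's proof: the refined expansion of $W(\rho_n^2(z))$ keeping $\hat Q_{2n}$ explicit, the decomposition $R_n(z) = R_n^0(z) + K_n(z)/n$, and the exact evaluation $\frac{1}{2\pi i}\int_{|z|=r} \bigl(R_n^0(z)\bigr)^{-1} T^{\perp}\, dz = -T^{\perp}$ via the residue at infinity. One small correction to your stated ``structural reason'': absorbing $\hat Q_{2n}$ into a plain $K_n$ would only reproduce the rough estimate \eqref{alrough}; the sharpening actually comes from the fact (which your computation does use) that the residue at infinity of $\bigl(R_n^0(z)\bigr)^{-1}T^{\perp}$ is independent of $\hat Q_{2n}$, so this term contributes exactly zero to the leading order, while its decay is needed only to keep the poles of $\bigl(R_n^0(z)\bigr)^{-1}$ inside $|z|=r$.
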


Theorems~\ref{thm:asymptla}, \ref{thm:asymptals} and~\ref{thm:asymptal} yield the following corollary. It is valid for an arbitrary orthogonal projector $T$, not necessarily defined by~\eqref{defT}.

\begin{cor}
Let $\hat L = L(\hat Q(x), T, \hat H)$ be a boundary value problem of the same form~\eqref{eqv}-\eqref{bc} as $L$, but with different coefficients $\hat Q(x)$ and $\hat H$. Suppose that 
$$
T \left( \frac{1}{2} \int_0^{\pi} (Q(x) - \hat Q(x)) \, dx - H + \hat H \right) T = 0, \quad
T^{\perp} \int_0^{\pi} (Q(x) - \hat Q(x)) \, dx \, T^{\perp} = 0.
$$
Then
\begin{gather*}
\rho_{nk} = \hat \rho_{nk} + \frac{\eta_{nk}}{n}, \quad
 \al_n^{(s)} = \hat \al_n^{(s)} + n^2 K_n, \quad \al_n^I = \hat \al_n^I + n K_n, \quad 
\al_n^{II} = \hat \al_n^{II} + n K_n, \\
 \quad n \in \mathbb N, \quad k, s = \overline{1, m}, \quad \{ \eta_{nk} \} \in l_2,
\end{gather*}
where the values $\hat \rho_{nk}$, $\hat \al_n^{(s)}$, $\hat \al_n^I$ and $\hat \al_n^{II}$ are defined by the problem $\hat L$ in the same way as $\rho_{nk}$, $\al_n^{(s)}$, $\al_n^I$ and $\al_n^{II}$ are defined by the problem $L$, respectively, for $n \in \mathbb N$, $k, s = \overline{1, m}$.
\end{cor}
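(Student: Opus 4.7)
The plan is to reduce to the block form \eqref{defT} for $T$ (applying the unitary transformation \eqref{transformU} to both $L$ and $\hat L$ with the same $U$, which preserves all spectral characteristics and the stated hypotheses), and then to subtract the asymptotic expansions of Theorems~\ref{thm:asymptla}, \ref{thm:asymptals} and~\ref{thm:asymptal} for the two problems. First I would observe that the two assumed identities on $Q - \hat Q$ and $H - \hat H$ are exactly the statements $\om_{11} - H = \hat\om_{11} - \hat H$ and $\om_{22} = \hat\om_{22}$: extracting the $(1,1)$ and $(2,2)$ blocks of $\om - \hat\om$ and using that $H = THT$ has only a $(1,1)$-block, the two hypotheses are precisely these block equalities. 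Hence, by Theorem~\ref{thm:asymptla}, the constants $\{z_k\}_{k = 1}^m$—the eigenvalues of $(\om_{11} - H)$ and $\om_{22}$ in nondecreasing order—coincide for $L$ and $\hat L$.

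The eigenvalue estimate is then immediate: subtracting \eqref{asymptla1} (resp.\ \eqref{asymptla2}) written for $\hat L$ from the same formula for $L$, the principal terms cancel and one obtains $\rho_{nk} - \hat\rho_{nk} = (\varkappa_{nk} - \hat\varkappa_{nk})/n$, so $\eta_{nk} := \varkappa_{nk} - \hat\varkappa_{nk}$ lies in $l_2$.

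For the weight-matrix sums, the key observation is that the leading matrix $A^{(s)}$ appearing in Theorem~\ref{thm:asymptals} depends only on the Hermitian block $(\om_{11} - H)$ (for $s \le p$) or $\om_{22}$ (for $s \ge p + 1$) and on $z_s$, not on the particular diagonalizing unitaries $\mathcal V$ or $\mathcal U$. Indeed, $\mathcal U^{\dagger} T_s \mathcal U$ is the orthogonal projector onto the $z_s$-eigenspace of $\om_{22}$, which is intrinsic; the same holds for $\mathcal V^{\dagger} T_s \mathcal V$ and $(\om_{11} - H)$. Under our hypotheses this projector is invariant under hats, so $A^{(s)} = \hat A^{(s)}$. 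Subtracting \eqref{alns1} or \eqref{alns2} for $\hat L$ from the corresponding formula for $L$ eliminates $A^{(s)}$ and leaves $\al_n^{(s)} - \hat\al_n^{(s)} = (2n^2/\pi)(K_n - \hat K_n) = n^2 K_n$ in the paper's $l_2$-sequence notation. Similarly, subtracting the formulas of Theorem~\ref{thm:asymptal} eliminates the common $T$ and $T^{\perp}$ terms and gives the claimed $n K_n$ estimates for $\al_n^I$ and $\al_n^{II}$.

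The only conceptually nontrivial step is the projector identification in the third paragraph; everything else is the routine subtraction of asymptotic expansions term by term, so I anticipate no serious obstacle once that invariance is recorded. One must also be slightly careful that the reduction to the form \eqref{defT} by a common $U$ does not alter any of the quantities $\rho_{nk}$, $\al_n^{(s)}$, $\al_n^I$, $\al_n^{II}$, which is clear from \eqref{transformU} and \eqref{defal} together with the fact that $U$ is unitary and independent of $\la$.
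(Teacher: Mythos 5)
Your proposal is correct and follows exactly the route the paper intends: the paper offers no written proof beyond the remark that Theorems~\ref{thm:asymptla}, \ref{thm:asymptals} and~\ref{thm:asymptal} yield the corollary, and your argument supplies precisely the missing details (the hypotheses amount to $\om_{11}-H=\hat\om_{11}-\hat H$ and $\om_{22}=\hat\om_{22}$, the leading terms $z_k$, $A^{(s)}$, $T$, $T^{\perp}$ coincide and cancel upon subtraction, with the independence of $A^{(s)}$ from the choice of $\mathcal U$, $\mathcal V$ already noted in the paper after~\eqref{res}). No gaps.
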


\section{Examples of Sturm-Liouville operators on a graph}

In this section, we apply our results to differential operators on a star-shaped graph. Two types of matching conditions are considered in the internal vertex: $\de$-coupling and $\de'$-coupling (see \cite{Ex97}).

Let $G$ be a star-shaped graph, consisting of $m$ edges $\{ e_j \}_{j = 1}^m$ of length $\pi$. For every edge $e_j$, we introduce a parameter $x_j \in [0, \pi]$. It is supposed that the end $x_j = 0$ corresponds to the boundary vertex and $x_j = \pi$ corresponds to the internal vertex.

Consider the system of Sturm-Liouville equations on the graph $G$:
\begin{equation} \label{StL}
    -y_j''(x_j) + q_j(x_j) y_j(x_j) = \la y_j(x_j), \quad x_j \in (0, \pi), \quad j = \overline{1, m},
\end{equation}
with the Dirichlet boundary conditions in the boundary vertices:
\begin{equation} \label{Dirich}
    y_j(0) = 0, \quad j = \overline{1, m},
\end{equation}
and the $\de$-type matching conditions in the internal vertex:
\begin{equation} \label{dmc}
    y_j(\pi) = y_1(\pi), \quad j = \overline{2, m}, \quad
    \sum_{j = 1}^m y_j'(\pi) = \beta y_1(\pi).
\end{equation}
Here $\{ q_j \}_{j = 1}^m$ are real-valued functions from $L_2(0, \pi)$, $\be \in \mathbb R$. The boundary value problem~\eqref{StL}-\eqref{dmc} can be rewritten in the matrix form, similar to~\eqref{eqv}-\eqref{bc}:
\begin{gather} \label{eqvt}
    -\tilde Y'' + \tilde Q(x) \tilde Y = \la \tilde Y, \quad x \in (0, \pi), \\ \label{bct}
    \tilde Y(0) = 0, \quad \tilde T (\tilde Y'(\pi) - \tilde H \tilde Y(\pi)) - \tilde T^{\perp} \tilde Y(\pi) = 0,
\end{gather}
where 
\begin{gather} \label{tT1}
    \tilde Q(x) = \diag\{q_j(x)\}_{j = 1}^m, \quad \tilde T = [\tilde T_{jk}]_{j, k = 1}^m, \quad \tilde T^{\perp} = [\tilde T^{\perp}_{jk}]_{j, k = 1}^m, \quad \tilde H = \frac{1}{m} \beta \tilde T, \\ \label{tT}
    \tilde T_{jk} = \frac{1}{m}, \quad 
    \tilde T^{\perp}_{jj} = \frac{m-1}{m}, \quad \tilde T^{\perp}_{jk} = -\frac{1}{m}, \quad j, k = \overline{1, m}, \: j \ne k.
\end{gather}

By a unitary transform \eqref{transformU}, the problem~\eqref{eqvt}-\eqref{bct} can be reduced to the form~\eqref{eqv}-\eqref{bc} with the matrices $T$, $T^{\perp}$ and $H$ in the form~\eqref{defT}. Clearly, $p = 1$ and $h = \frac{\beta}{m}$. The matrix $U$ in \eqref{transformU} has to be chosen in such a way, that the columns of $U^{\dagger}$ equal to the orthonormal eigenvectors of the matrices $\tilde T$ and $\tilde T^{\perp}$, corresponding to the eigenvalue $1$. In particular, the first column of $U^{\dagger}$ consists of the equal numbers $\frac{1}{\sqrt{m}}$ and the remaining columns are the orthonormal eigenvectors of $\tilde T^{\perp}$. The choice of $U$ is not unique, but all the possible choices lead to the same asymptotics of the spectral data.

Let us find the matrices $\om_{11}$ and $\om_{22}$. Obviously,
$$
\tilde \om = \frac{1}{2} \int_0^{\pi} \tilde Q(x) \, dx = \diag\{\tilde \om_j\}_{j = 1}^m, \quad
\om = U \tilde \om U^{\dagger}.
$$
The matrix $\om_{11}$ contains only one element. Since the first row of $U$ consists of the numbers $\frac{1}{\sqrt{m}}$, we get
\begin{equation} \label{defz1}
\om_{11} = \frac{1}{m} \sum_{j = 1}^m \tilde \om_j, \quad
z_1 = \om_{11} - \frac{\be}{m}.
\end{equation}
Using~\eqref{transformU}, we derive the relation
\begin{equation} \label{sm5}
T^{\perp} \om T^{\perp} =  T^{\perp} U \tilde \om U^{\dagger} T^{\perp} = U \tilde T^{\perp} \tilde \om \tilde T^{\perp} U^{\dagger}.
\end{equation}
Clearly, the polynomial $\det (z I_m - T^{\perp} \om T^{\perp})$ has the zero $z = 0$, and its other zeros coincide with the eigenvalues $\{ z_k \}_{k = 2}^m$ of the matrix $\om_{22} \in \mathbb C^{(m-1) \times (m - 1)}$, counting with their multiplicities. In view of~\eqref{sm5}, we have
$$
\det (z I_m - T^{\perp} \om T^{\perp}) = \det (z I_m - \tilde T^{\perp} \tilde \om \tilde T^{\perp}).
$$
Using~\eqref{tT}, one can show that 
$$
\det (z I_m - \tilde T^{\perp} \tilde \om \tilde T^{\perp}) = \frac{z}{m} \mathcal P(z), \quad \mathcal P(z) := \frac{d}{dz} \left( \prod_{j = 1}^m (z - \tilde \om_j)\right).
$$

The unitary transform $U$ does not influence the eigenvalues. Thus, we arrive at the following result. 

\begin{thm} \label{thm:graph1}
The spectrum of the boundary value problem \eqref{StL}-\eqref{dmc} is formed by a countable set of real eigenvalues $\{ \tilde \la_{nk} \}_{n \in \mathbb N, \, k = \overline{1, m}}$, such that $\tilde \rho_{nk} = \sqrt{\tilde \lambda_{nk}}$ satisfy the asymptotic formulas 
\begin{align*}
    & \tilde \rho_{n1} = n - \frac{1}{2} + \frac{z_1}{\pi n} + \frac{\tilde \varkappa_{n1}}{n}, \\
    & \tilde \rho_{nk} = n + \frac{z_k}{\pi n} + \frac{\tilde \varkappa_{nk}}{n}, \quad k = \overline{2, m},
\end{align*}
where $z_1$ is defined by \eqref{defz1}, $\{ z_k \}_{k = 2}^m$ are the zeros of the polynomial $\mathcal P(z)$, numbered according to their multiplicities, and $\{ \tilde \varkappa_{nk} \} \in l_2$.
\end{thm}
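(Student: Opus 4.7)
The plan is to reduce the graph boundary value problem \eqref{StL}-\eqref{dmc} to the canonical matrix Sturm-Liouville form $L$ and then invoke Theorem~\ref{thm:asymptla} directly. First, I would rewrite the scalar system in the matrix form \eqref{eqvt}-\eqref{bct} with $\tilde Q$, $\tilde T$, $\tilde T^{\perp}$, $\tilde H$ as in \eqref{tT1}-\eqref{tT}: here $\tilde T$ is the rank-one orthogonal projector with entries $1/m$ and $\tilde H = (\beta/m)\tilde T$, so the $\de$-matching conditions are exactly captured by the general self-adjoint boundary condition of the problem $L$. Then I would apply a unitary transform as in \eqref{transformU} with first row equal to $(1/\sqrt m)(1,\dots,1)$ to bring $(\tilde T, \tilde T^{\perp}, \tilde H)$ to the normalized form \eqref{defT} with $p = 1$ and $h = \beta/m$. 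Since the transform is unitary, the spectrum is preserved.

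Next, Theorem~\ref{thm:asymptla} gives $\tilde \rho_{n1} = n - 1/2 + z_1/(\pi(n-1/2)) + \varkappa_{n1}/n$ where $z_1$ is the unique eigenvalue of the $1\times 1$ matrix $\om_{11} - H$, and $\tilde \rho_{nk} = n + z_k/(\pi n) + \varkappa_{nk}/n$ for $k = \overline{2, m}$ where $\{z_k\}_{k=2}^m$ are the eigenvalues of $\om_{22}$ listed in nondecreasing order with multiplicity. Note that $1/(\pi(n-1/2)) - 1/(\pi n) = O(n^{-2})$, so this discrepancy can be absorbed into a redefined $l_2$-remainder $\tilde \varkappa_{n1}/n$, yielding the form stated in the theorem.

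It remains to identify the numbers $z_k$ explicitly in terms of the scalar data $\{\tilde \om_j\}_{j=1}^m$ and $\be$. Because the first row of $U$ consists of the entries $1/\sqrt m$, one gets $\om_{11} = (U \tilde \om U^{\dagger})_{11} = (1/m)\sum_{j=1}^m \tilde \om_j$, whence $z_1 = \om_{11} - \be/m$ as in \eqref{defz1}. For the remaining $z_k$, I would use the unitary similarity \eqref{sm5} between $T^{\perp} \om T^{\perp}$ and $\tilde T^{\perp} \tilde \om \tilde T^{\perp}$; these two matrices share the same characteristic polynomial, whose roots consist of a trivial $0$ (coming from the rank-$(m-1)$ nature of $T^{\perp}$) and the eigenvalues $\{z_k\}_{k=2}^m$ of $\om_{22}$. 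Thus it suffices to establish the determinantal identity $\det(zI_m - \tilde T^{\perp} \tilde \om \tilde T^{\perp}) = (z/m)\mathcal P(z)$.

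The main obstacle is this last identity. I would verify it by writing $\tilde T^{\perp} = I_m - m^{-1} J$, where $J$ is the all-ones matrix, and computing $\tilde T^{\perp} \tilde \om \tilde T^{\perp}$ as a rank-two perturbation of the diagonal matrix $\tilde \om$; then either the matrix determinant lemma applied twice or direct row reduction reduces the determinant to a scalar expression involving $\prod_{j=1}^m (z - \tilde \om_j)$ and $\sum_j \prod_{i \ne j}(z - \tilde \om_i)$, with the latter sum being precisely $\mathcal P(z) = \frac{d}{dz}\prod_{j=1}^m(z - \tilde \om_j)$. A useful sanity check at intermediate steps is that $z = 0$ is a root of the left-hand side, matching the explicit factor $z/m$. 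Once this identity is in hand, the eigenvalues $\{z_k\}_{k=2}^m$ of $\om_{22}$ coincide with the zeros of $\mathcal P(z)$ counted with multiplicity, completing the proof.
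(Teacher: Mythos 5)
Your proposal follows essentially the same route as the paper: reduce \eqref{StL}--\eqref{dmc} to the matrix form \eqref{eqvt}--\eqref{bct}, normalize via the unitary transform \eqref{transformU}, apply Theorem~\ref{thm:asymptla} with $p=1$, $h=\beta/m$, and identify $z_1$ from the first row of $U$ and $\{z_k\}_{k=2}^m$ from the similarity \eqref{sm5} together with the identity $\det(zI_m - \tilde T^{\perp}\tilde\om\tilde T^{\perp}) = (z/m)\mathcal P(z)$. Your added details --- absorbing the $O(n^{-2})$ discrepancy between $1/(\pi(n-1/2))$ and $1/(\pi n)$ into the remainder, and verifying the determinant identity via the rank-two perturbation $\tilde T^{\perp} = I_m - m^{-1}J$ --- correctly fill in steps the paper leaves implicit.
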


Define the weight matrices $\{ \tilde \al_{nk} \}_{n \in \mathbb N, \, k = \overline{1, m}}$ of the problem~\eqref{StL}-\eqref{dmc} on the star-shaped graph as the weight matrices of the equivalent matrix problem~\eqref{eqvt}-\eqref{bct}. Let $\{ \tilde \al_n^{(s)} \}_{n \in \mathbb N, \, s = \overline{1, m}}$, and $\{ \tilde \al_n^{II} \}_{n \in \mathbb N}$ be the sums of the weight matrices, defined for the problem~\eqref{eqvt}-\eqref{bct} similarly to the ones in Section~3. Then Theorems~\ref{thm:asymptals} and~\eqref{thm:asymptal} together with the relations~\eqref{transformU} imply the following theorem.

\begin{thm} \label{thm:graph2}
The weight matrices of the problem~\eqref{StL}-\eqref{dmc} satisfy the following asymptotic relations:
\begin{gather*}
\tilde \al_n^{(1)} = \frac{2 (n - 1/2)^2}{\pi} \left( \tilde T + \frac{K_n}{n}\right), \quad
\tilde \al_n^{II} = \frac{2 n^2}{\pi} \left( \tilde T^{\perp} + \frac{K_n}{n}\right), \\
\tilde \al_n^{(s)} = \frac{2 n^2}{\pi} \left( U^{\dagger} A^{(s)} U + K_n \right), \quad s = \overline{2, m}, \quad n \in \mathbb N,
\end{gather*}
where $\{ A^{(s)} \}_{s = 2}^m$ are the matrices, defined by \eqref{As1}.
\end{thm}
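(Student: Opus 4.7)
The plan is to reduce Theorem~\ref{thm:graph2} to Theorems~\ref{thm:asymptals} and~\ref{thm:asymptal} applied to the standard-form problem~\eqref{eqv}-\eqref{bc} obtained from the graph problem~\eqref{eqvt}-\eqref{bct} by the unitary transform~\eqref{transformU}. The first step I would carry out is to relate the Weyl matrix $\tilde M(\la)$ of the graph problem to the Weyl matrix $M(\la)$ of the transformed problem. Using $\tilde Q = U^{\dagger} Q U$, $\tilde T = U^{\dagger} T U$, $\tilde H = U^{\dagger} H U$ and the unitarity of $U$, a direct verification shows that $U \tilde \Phi(x, \la) U^{\dagger}$ solves equation~\eqref{eqv}, takes the value $I_m$ at $x = 0$, and satisfies $V(U \tilde \Phi U^{\dagger}) = U \tilde V(\tilde \Phi) U^{\dagger} = 0$. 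Uniqueness of the Weyl solution then forces $\Phi(x, \la) = U \tilde \Phi(x, \la) U^{\dagger}$, so differentiation at $x = 0$ yields $\tilde M(\la) = U^{\dagger} M(\la) U$.

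Next, I would take residues at each eigenvalue, noting that the constant unitary similarity commutes with the residue operation and with the sums that group residues according to Section~3. This gives $\tilde \al_n^{(s)} = U^{\dagger} \al_n^{(s)} U$ for $s = \overline{1, m}$ and $\tilde \al_n^{II} = U^{\dagger} \al_n^{II} U$. I would then substitute the asymptotics of Theorems~\ref{thm:asymptals} and~\ref{thm:asymptal}, together with $\tilde T = U^{\dagger} T U$ and $\tilde T^{\perp} = U^{\dagger} T^{\perp} U$ from~\eqref{transformU}, to produce the three asymptotic relations. Since $p = 1$ here, the index $s = 1$ satisfies $\al_n^{(1)} = \al_n^I$ and is governed by Theorem~\ref{thm:asymptal}, while indices $s = \overline{2, m}$ fall under formula~\eqref{alns1} of Theorem~\ref{thm:asymptals}; conjugating the corresponding main terms by $U$ yields $\tilde T$, $\tilde T^{\perp}$, and $U^{\dagger} A^{(s)} U$, respectively, exactly as asserted.

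The only subtle point I would need to verify is that conjugation by the unitary matrix $U$ preserves the remainder class $\{K_n\}$ used throughout Section~3. This is immediate, since the spectral norm is unitarily invariant and hence $\|U^{\dagger} K_n U\| = \|K_n\|$, so the defining $l_2$-summability is unaffected. I do not expect any substantial obstacle: Theorem~\ref{thm:graph2} is essentially the image of the matrix Sturm-Liouville asymptotics under the basis change~\eqref{transformU}, and the computations identifying $\om_{11}$ and $\om_{22}$ in terms of the graph data have already been carried out in the text preceding Theorem~\ref{thm:graph1}.
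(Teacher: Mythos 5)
Your proposal is correct and follows essentially the same route as the paper, which simply notes that Theorem~\ref{thm:graph2} follows from Theorems~\ref{thm:asymptals} and~\ref{thm:asymptal} combined with the unitary transform~\eqref{transformU}; you have merely filled in the details (the identity $\tilde M(\la) = U^{\dagger} M(\la) U$, conjugation of residues, and unitary invariance of the remainder class), all of which check out, including the correct use of Theorem~\ref{thm:asymptal} rather than~\eqref{alns2} for $s=1$ to obtain the sharper $K_n/n$ remainder.
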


The eigenvalue asymptotics of Theorem~\ref{thm:graph1} have been obtained in \cite{MP15} by another method.
In \cite{Kuz18}, asymptotic relations for the main diagonals of the weight matrices $\{ \al_{nk} \}_{n \in \mathbb N, \, k = \overline{1, m}}$ have been derived, which conform to Theorem~\ref{thm:graph2}.

Further we study the case of $\de'$-coupling. Consider the boundary value problem on the star-graph $G$ for the Sturm-Liouville equations~\eqref{StL} with the Dirichlet boundary conditions \eqref{Dirich} and the following $\de'$-type matching conditions:
\begin{equation} \label{dtmc}
    y_j'(\pi) = y_1'(\pi), \quad j = \overline{2, m}, \quad \sum_{j = 1}^m y_j(\pi) = \be y_1'(\pi).
\end{equation}

The conditions~\eqref{dtmc} can be rewritten in the matrix form as follows:
$$
\tilde T^{\perp} \tilde Y'(\pi) - \tilde T (\tilde Y(\pi) - \tilde H \tilde Y'(\pi)) = 0,
$$
where $\tilde T$, $\tilde T^{\perp}$ and $\tilde H$ are defined by~\eqref{tT1}-\eqref{tT}. In the case $\be \ne 0$, we obtain the Robin-type boundary condition
$$
\tilde Y'(\pi) - \Theta \tilde Y(\pi) = 0, \quad \Theta := (\tilde T^{\perp} + \tilde T \tilde H)^{-1} \tilde T.
$$

Matrix Sturm-Liouville operators with such boundary conditions have been studied, e.g., in \cite{Bond19}, so we exclude this case from consideration.

Suppose that $\be = 0$. Then the problem~\eqref{StL}, \eqref{Dirich}, \eqref{dtmc} can be reduced to the matrix form~\eqref{eqv}-\eqref{bc} by the unitary transform with the same matrix $U$, which has been used for the $\de$-coupling case. We obtain $T = U \tilde T^{\perp} U^{\dagger}$, $T^{\perp} = U \tilde T U^{\dagger}$, $p = m - 1$, $H = 0$. Summarizing the arguments above, we arrive at the following theorem, providing the asymptotics for the spectral data $\{ \tilde \la_{nk}, \tilde \al_{nk} \}_{n \in \mathbb N, \, k = \overline{1, m}}$ of the problem \eqref{StL}, \eqref{Dirich}, \eqref{dtmc}.

\begin{thm}
The spectrum of the boundary value problem \eqref{StL}, \eqref{Dirich}, \eqref{dtmc} with $\be = 0$ is formed by a countable set of real eigenvalues $\{ \tilde \lambda_{nk} \}_{n \in \mathbb N, \, k = \overline{1, m}}$, such that $\tilde \rho := \sqrt{\la_{nk}}$ satisfy the asymptotic formulas
\begin{align*}
    & \tilde \rho_{nk} = n - \frac{1}{2} + \frac{z_k}{\pi n} + \frac{\tilde \varkappa_{nk}}{n}, \quad k = \overline{1, m-1}, \\
    & \tilde \rho_{nm} = n + \frac{z_m}{\pi n} + \frac{\tilde \varkappa_{nm}}{n}, 
\end{align*}
where $\{ z_k \}_{k = 1}^{m-1}$ are the zeros of the polynomial $\mathcal P(z)$, $z_m = \frac{1}{m} \sum\limits_{j = 1}^m \tilde \om_j$, $\{ \tilde \varkappa_{nk} \} \in l_2$.

The sums of the weight matrices satisfy the following asymptotic relations:
\begin{gather*}
\tilde \al_n^{(s)} = \frac{2 (n - 1/2)^2}{\pi} (U^{\dagger} A^{(s)} U + K_n), \quad s = \overline{1, m-1}, \quad n \in \mathbb N, \\
\tilde \al_n^I = \frac{2(n-1/2)^2}{\pi} \left(\tilde T^{\perp} + \frac{K_n}{n} \right), \quad
\tilde \al_n^{(m)} = \frac{2 n^2}{\pi} \left( \tilde T + \frac{K_n}{n}\right), \quad n \in \mathbb N,
\end{gather*}
where the matrices $\{ A^{(s)} \}_{s = 1}^{m-1}$ are defined by~\eqref{As2} and $\tilde T$, $\tilde T^{\perp}$ are defined by \eqref{tT}.
\end{thm}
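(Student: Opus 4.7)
The plan is to parallel the treatment of the $\delta$-coupling case just given. First I would rewrite the matching conditions~\eqref{dtmc} with $\be = 0$ in the matrix form $\tilde T^\perp \tilde Y'(\pi) - \tilde T \tilde Y(\pi) = 0$ and recognize this as a special instance of~\eqref{bct} with the roles of $\tilde T$ and $\tilde T^\perp$ interchanged and with $\tilde H = 0$. Applying the same unitary transform $U$ as in the $\delta$-coupling analysis, one obtains a problem of the form~\eqref{eqv}-\eqref{bc} in which the new projector $T = U \tilde T^\perp U^\dagger$ has rank $p = m-1$, $T^\perp = U \tilde T U^\dagger$ has rank $1$, and $H = 0$.

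Next I would compute the matrices $\om_{11}$ and $\om_{22}$ associated to this reduced problem. Since $T^\perp$ in the new basis is a rank-one projector along the span of $U^\dagger \mathbf{1}/\sqrt m$, the scalar $\om_{22}$ equals $\tfrac{1}{m}\sum_{j=1}^m \tilde\om_j$, giving $z_m = \tfrac{1}{m}\sum_j \tilde\om_j$. For the eigenvalues of $\om_{11} - H = \om_{11}$, I would use the identity $T \om T = U \tilde T^\perp \tilde \om \tilde T^\perp U^\dagger$, paralleling~\eqref{sm5}, so that the nonzero spectrum of $\om_{11}$ coincides with the nonzero spectrum of $\tilde T^\perp \tilde \om \tilde T^\perp$. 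The identity $\det(zI_m - \tilde T^\perp \tilde\om \tilde T^\perp) = \tfrac{z}{m}\mathcal P(z)$ was already established in the $\delta$-coupling discussion and may be quoted, so these eigenvalues are exactly the $m-1$ zeros $\{z_k\}_{k=1}^{m-1}$ of $\mathcal P(z)$, counted with multiplicities.

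With all spectral parameters of the reduced problem identified, the eigenvalue asymptotics follow directly from Theorem~\ref{thm:asymptla} applied with $p = m-1$; the cosmetic difference between $z_k/(\pi(n-1/2))$ and $z_k/(\pi n)$ amounts to an $O(n^{-2})$ perturbation that is absorbed into the $\ell_2$-remainder $\tilde \varkappa_{nk}/n$. For the weight matrices, Theorem~\ref{thm:asymptal} yields the refined formulas for $\al_n^I$ and $\al_n^{II}$ of the transformed problem. In the present situation $\om_{22}$ is a scalar, so $\al_n^{II}$ reduces to the single group $\al_n^{(m)}$; conjugating back by $U$ via~\eqref{transformU} and using $U^\dagger T U = \tilde T^\perp$, $U^\dagger T^\perp U = \tilde T$ gives the stated expressions for $\tilde \al_n^I$ and $\tilde \al_n^{(m)}$. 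For $s = 1,\dots,m-1$, Theorem~\ref{thm:asymptals} produces $\al_n^{(s)} = \tfrac{2(n-1/2)^2}{\pi}(A^{(s)} + K_n)$ with $A^{(s)}$ defined by~\eqref{As2} in the new basis, and the same unitary conjugation yields the claimed formula for $\tilde\al_n^{(s)}$.

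The main obstacle is purely notational bookkeeping: one must consistently track that the interchange of $\tilde T$ and $\tilde T^\perp$ swaps the $p = 1$ of the $\delta$-coupling case with $p = m-1$ here, so that the role played by $\om_{11}-H$ there is played by $\om_{22}$ now, and vice versa. No new analytic estimate is needed, and the polynomial identity for $\det(zI_m - \tilde T^\perp \tilde \om \tilde T^\perp)$ has already been verified in the preceding subsection, so the proof reduces to invoking Theorems~\ref{thm:asymptla}, \ref{thm:asymptals}, and~\ref{thm:asymptal} after the reduction is set up.
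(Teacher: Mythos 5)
Your proposal is correct and follows essentially the same route as the paper: rewrite the $\de'$-conditions with $\be = 0$ in the form~\eqref{bct} with $\tilde T$ and $\tilde T^{\perp}$ interchanged, reduce by the same unitary $U$ to the canonical form with $p = m-1$, $H = 0$, identify $\om_{11}$ and $\om_{22}$ via the already-established identity $\det(z I_m - \tilde T^{\perp} \tilde \om \tilde T^{\perp}) = \frac{z}{m} \mathcal P(z)$, and then invoke Theorems~\ref{thm:asymptla}, \ref{thm:asymptals} and~\ref{thm:asymptal} together with the conjugation by $U$. The paper's own proof is exactly this reduction, stated even more tersely, so nothing further is needed.
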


\medskip

{\bf Acknowledgements.} This work was supported by Grant 19-71-00009 of the Russian Science Foundation.

\medskip

\medskip

\noindent Natalia Pavlovna Bondarenko \\
1. Department of Applied Mathematics and Physics, Samara National Research University, \\
Moskovskoye Shosse 34, Samara 443086, Russia, \\
2. Department of Mechanics and Mathematics, Saratov State University, \\
Astrakhanskaya 83, Saratov 410012, Russia, \\
e-mail: {\it BondarenkoNP@info.sgu.ru}

\end{document}